\newtheorem{thm}{Theorem}[section]
\newtheorem{defn}[thm]{Definition}
\newtheorem{prop}[thm]{Proposition}
\newtheorem{cor}[thm]{Corollary}
\newtheorem{lem}[thm]{Lemma}
\newtheorem{remark}[thm]{Remark}
\newtheorem{ex}[thm]{Example}
\newproof{proof}{Proof}
\begin{document}

\begin{frontmatter}

\title{Characterizations of woven frames}

\author[NITM]{A. Bhandari}
\ead{animesh@nitm.ac.in}

\author[NITM]{S. Mukherjee\corref{cor}\fnref{fn}}
\ead{saikat.mukherjee@nitm.ac.in}

\cortext[cor]{Corresponding author}
\fntext[fn]{Supported by DST-SERB project MTR/2017/000797.}

\address[NITM]{Dept. of Mathematics, NIT Meghalaya, Shillong 793003, India}





\begin{abstract}
In a separable Hilbert space $\mathcal H$, two frames $\{f_i\}_{i \in I}$ and $\{g_i\}_{i \in I}$ are said to be woven if there are constants $0<A \leq B$ so that for every $\sigma \subset I$, $\{f_i\}_{i \in \sigma} \cup \{g_i\}_{i \in \sigma ^c}$ forms a frame for $\mathcal H$ with the universal bounds $A, B$. This article provides methods of constructing  woven frames. In particular, bounded linear operators are used to create woven frames from a given frame.  Several examples are discussed to validate the results. Moreover, the notion of woven frame sequences is introduced and characterized.
\end{abstract}

\begin{keyword}
Frames \sep Woven  Frames\sep Gap\sep Angle
\MSC[2010] Primary 42C15; Secondary 46C07, 97H60
\end{keyword}

\end{frontmatter}


\section{Introduction}	Hilbert space frame was first initiated by D. Gabor \cite{Ga46} in 1946 to reconstruct signals using fourier co-efficients.  Later, in 1986, frame theory was reintroduced and popularized by Daubechies, Grossman and Meyer \cite{DaGrMa86}.


Frame theory literature became richer through several generalizations, namely,  $G$-frame (generalized frames) \cite{ Su06}, $K$-frame (frames for operators (atomic systems)) \cite{Ga12},  fusion frame (frames of subspaces) (\cite{CaKu04, KhAs07}), $K$-fusion frame (atomic subspaces) \cite{Bh17},  etc. and some spin-off applications  by means of  Gabor analysis in (\cite{EaPa12, EaPa13}), dynamical system in mathematical physics in \cite{MiCh18}, nature of shift invariant spaces on the Heisenberg group in \cite{ArRa19}, characterizations of discrete wavelet frames in $\mathbb{C}^{\mathbb N}$ in \cite{DeVa17}, extensions of dual wavelet frames in \cite{EaPa18}, constructions of disc wavelets in \cite{AbGi14}, orthogonality of frames on locally compact abelian groups in \cite{GaSh19}  and many more.



Let us consider a scenario: suppose in a sensor network system, there are sensors $A_1, A_2, \cdots, A_n$ which capture data to produce certain results. These sensors can be characterized by frames. In case one of these sensors, say $A_k$, fails to operate due to some technical reason, then the results obtained from these sensors may contain errors. Now assume that there are another set of sensors $B_1, B_2, \cdots, B_n$ which does play similar role as $A_i$'s. In addition, in the case of $A_k$ fails, $B_k$ can substitute so that obtained results are error free. Such an intertwinedness between two sets of sensors, or in general between two frames, leads to the idea of weaving frames. Weaving frames or woven frames were recently introduced by Bemrose et. al. in \cite{BeCaGrLaLy16}. After that Deepshikha et. al. produced a generalized form of weaving frames in \cite{DeVaVe17}, they also studied the weaving properties of generalized continuous frames in \cite{VaDe16}, vector-valued (super) weaving frames in \cite{DeVa18}.

This article focuses on study, characterize and explore several properties of woven frames. The outline of this article is organized as follows. Section \ref{Sec-Preli} is devoted to the basic definitions and results related to various kinds of frames, angle and gap between  subspaces. Moreover, the characterizations of woven frames are analyzed in Section \ref{Sec-char}. Finally, woven frame sequences are established in Section \ref{Sec-wov seq}.

Throughout the paper, $\mathcal{H}$ is a separable Hilbert space. We denote by $\mathcal H^n$ an $n$-dimensinal Hilbert space, $\{e_i\}_{ i \in [n]},~e_i=(\delta_{i, k})_{k\geq 1}$ is an orthonormal basis in $\mathcal H^n$, $\mathcal L(\mathcal H^n)$ to be a collection of all bounded, linear operators on $\mathcal H^n$, $R(T)$ is denoted as the range of the operator $T$, by $\hat {\delta}(M,N)$ we denote the {\it gap} between two closed subspaces $M$ and $N$  of a Hilbert space $\mathcal H$, $c_{0} (M,N)$ is denoted as the {\it cosine} of the minimal angle between $M$ and $N$,  $[n]=\lbrace 1,2,\cdots,n \rbrace$ and the index set $I$ is either finite or countably infinite. Given $J\subset I$, a Bessel sequence $\{f_i\}_{i\in I}$ and $\{c_i\}\in l^2(I)$, we define $T_{J_f} (\{c_i\}) = \sum\limits_{i\in J} c_if_i$. It is to be noted that $T_{J_f}$ is well-defined.

\section{Preliminaries}\label{Sec-Preli}

In this section we recall basic definitions and results needed in this paper. For more details we refer the books  written by  Casazza and Kutyniok \cite{CaKu12} and Ole Christensen \cite{Ch08}.

\subsection{Frame} A collection  $\{ f_i \}_{i\in I}$ in $\mathcal{H}$ is called a \emph{frame} for $\mathcal H$ if there exist constants $A,B >0$ such that \begin{equation}\label{Eq:Frame} A\|f\|^2~ \leq ~\sum_{i\in I} |\langle f,f_i\rangle|^2 ~\leq ~B\|f\|^2,\end{equation}for all $f \in \mathcal{H}$. The numbers $A, B$ are called \emph{frame bounds}. The operator $S: \mathcal H\rightarrow \mathcal H$, defined by  $Sf = \sum_{i\in I} \langle f, f_i\rangle f_i$ is called the frame operator for $\{f_i\}_{i \in I}$. It is well-known that the frame operator is linear, bounded, positive, self-adjoint and invertible.


\begin{defn}
Let $\{ f_i \}_{i\in I}$ and $\{ g_i \}_{i\in I}$  be two frames for $\mathcal H$. If for all $f \in \mathcal H$, $f = \sum \limits_{i\in I} \langle f, g_i\rangle f_i $, then $\{ g_i \}_{i\in I}$ is called a dual frame of $\{ f_i \}_{i\in I}$. If $S$ is the frame operator of $\{f_i\}_{i\in I}$, then $ \{ S^{-1}  f_i \}_{i\in I}$ is said to be the {\it canonical dual} frame of $\{ f_i \}_{i\in I}$.


\end{defn}

\begin{prop} (\cite{Ch08, HaKoLaWe07}) \label{prop-span} A finite family $\lbrace f_i \rbrace_{i \in [m]}$ in $\mathcal H^n$, forms a frame for  $\mathcal H^n$ if and only if $span \lbrace f_i \rbrace_{i \in [m]}= \mathcal H^n$.

\end{prop}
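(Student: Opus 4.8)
The plan is to prove both implications directly, exploiting the finite dimensionality of $\mathcal H^n$. For the forward direction I would argue by contraposition: if $span\{f_i\}_{i\in[m]} \neq \mathcal H^n$, then its orthogonal complement contains a nonzero vector $f$, for which $\langle f, f_i\rangle = 0$ for every $i \in [m]$, so that $\sum_{i\in[m]} |\langle f, f_i\rangle|^2 = 0$. Since $f \neq 0$ while any lower frame bound $A$ is strictly positive, this violates the left inequality in \eqref{Eq:Frame}, showing $\{f_i\}_{i\in[m]}$ cannot be a frame.

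For the converse the upper bound comes essentially for free: applying the Cauchy--Schwarz inequality termwise yields $\sum_{i\in[m]} |\langle f, f_i\rangle|^2 \leq \left(\sum_{i\in[m]} \|f_i\|^2\right)\|f\|^2$, so $B = \sum_{i\in[m]} \|f_i\|^2$ serves as a Bessel (upper) bound without any spanning assumption. The substantive step is therefore the lower bound. I would introduce the continuous map $\varphi(f) = \sum_{i\in[m]} |\langle f, f_i\rangle|^2$ and restrict it to the unit sphere $\{f \in \mathcal H^n : \|f\| = 1\}$, which is compact precisely because $\dim \mathcal H^n < \infty$. The spanning hypothesis forces $\varphi(f) > 0$ on the sphere, since $\varphi(f) = 0$ would make $f$ orthogonal to every $f_i$ and hence to all of $span\{f_i\}_{i\in[m]} = \mathcal H^n$, contradicting $\|f\| = 1$. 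By the extreme value theorem $\varphi$ attains a minimum $A > 0$ on the sphere, and homogeneity of degree two, $\varphi(\lambda f) = |\lambda|^2 \varphi(f)$, promotes this to $A\|f\|^2 \leq \varphi(f)$ for all $f \in \mathcal H^n$, which together with the Bessel estimate completes \eqref{Eq:Frame}.

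The only delicate point, and the crux of the whole argument, is the lower bound. Its two essential ingredients are the compactness of the unit sphere in finite dimensions and the strict positivity of $\varphi$ on it; both are genuinely used and neither survives in infinite dimensions, which is exactly why this clean equivalence is special to $\mathcal H^n$ and why the hypothesis of finiteness cannot simply be dropped.
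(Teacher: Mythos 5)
Your proof is correct and complete: the contrapositive argument for the forward direction and the compactness/extreme-value argument (with degree-two homogeneity) for the lower bound are exactly the standard proof of this classical fact. The paper itself gives no proof, citing \cite{Ch08, HaKoLaWe07} instead, and your argument is essentially the one found in those references, so there is nothing to reconcile.
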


\subsection{Woven and full spark frame}  In a Hilbert space $\mathcal H$, a family of frames $\lbrace f_{ij} \rbrace_{i \in \mathbb N , j \in [M]}$ is said to be {\it weakly woven} if for any partition $\lbrace \sigma_j \rbrace_{j \in [M]}$ of $\mathbb N$, $\lbrace f_{ij} \rbrace_{i \in \sigma_j , j \in [M]}$ forms a frame for $\mathcal H$.

Also, in $\mathcal H$, two frames $\mathcal F=\lbrace f_i \rbrace_{i \in I}$ and  $\mathcal G=\lbrace g_i \rbrace_{i \in I}$ are said to be {\it woven} if for every $\sigma \subseteq I$,  $\lbrace f_i \rbrace_{i \in \sigma} \cup \lbrace g_i \rbrace_{i \in \sigma^c}$ also forms a frame for $\mathcal H$ and the associated frame operator for every weaving is defined as,
$$S_{\mathcal F \mathcal G}f=\sum_{i \in \sigma}\langle f,f_i \rangle f_i + \sum_{i \in \sigma^c}\langle f,g_i \rangle g_i, ~~ \text{for}~ \text{all}~f \in \mathcal H.$$

\begin{thm} \cite{BeCaGrLaLy16} In $\mathcal H$, two frames are weakly woven if and only if they are woven.

\end{thm}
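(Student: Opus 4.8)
The forward implication is immediate from the definitions: if the pair $\mathcal F,\mathcal G$ is woven then every weaving $\{f_i\}_{i\in\sigma}\cup\{g_i\}_{i\in\sigma^c}$ is in particular a frame, which is exactly the weakly woven condition. For the converse I would treat the two universal bounds separately, and the upper one comes for free. If $B_f$ and $B_g$ are Bessel bounds of $\{f_i\}$ and $\{g_i\}$, then for every $\sigma\subseteq I$ and every $f\in\mathcal H$,
$$\sum_{i\in\sigma}|\langle f,f_i\rangle|^2+\sum_{i\in\sigma^c}|\langle f,g_i\rangle|^2\le\sum_{i\in I}|\langle f,f_i\rangle|^2+\sum_{i\in I}|\langle f,g_i\rangle|^2\le(B_f+B_g)\|f\|^2,$$
so $B=B_f+B_g$ is a $\sigma$-independent upper bound. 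Hence the entire content of the theorem lies in producing a universal lower bound.

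I would obtain the lower bound by contradiction. Writing $\Theta(\sigma,x)=\sum_{i\in\sigma}|\langle x,f_i\rangle|^2+\sum_{i\in\sigma^c}|\langle x,g_i\rangle|^2$, suppose no universal lower bound exists; since the upper bound holds, I can select subsets $\sigma_n\subseteq I$ and unit vectors $x_n$ with $\Theta(\sigma_n,x_n)<\varepsilon_n$ for a prescribed sequence $\varepsilon_n\downarrow 0$, and pass to a subsequence with $x_n\rightharpoonup x$ weakly. The aim is to manufacture a \emph{single} partition $\sigma$ whose weaving violates the lower frame inequality, contradicting the weakly woven hypothesis. When $x\ne 0$ this is short: for each fixed $i$, weak convergence gives $\langle x_n,f_i\rangle\to\langle x,f_i\rangle$ and $\langle x_n,g_i\rangle\to\langle x,g_i\rangle$, while $i\in\sigma_n$ forces $|\langle x_n,f_i\rangle|^2\le\Theta(\sigma_n,x_n)\to 0$ and $i\in\sigma_n^c$ forces $|\langle x_n,g_i\rangle|^2\to 0$. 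As each $i$ lies in $\sigma_n$ or in $\sigma_n^c$ infinitely often, every $i$ satisfies $\langle x,f_i\rangle=0$ or $\langle x,g_i\rangle=0$; choosing $\sigma=\{i:\langle x,f_i\rangle=0\}$ then yields $\Theta(\sigma,x)=0$ with $x\ne 0$, so that weaving is not a frame.

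The main obstacle is the weak-null case $x=0$, where the coordinatewise information is vacuous because all the mass of $x_n$ escapes to infinity; here the Bessel property must be exploited through a gliding-hump/disjointification argument. Since $x_n\rightharpoonup 0$, for each fixed $N$ the head energy $\sum_{i\le N}\big(|\langle x_n,f_i\rangle|^2+|\langle x_n,g_i\rangle|^2\big)\to 0$, and combined with Bessel tail decay this lets me choose, along a further subsequence, finite index windows $W_n=[a_n,b_n]$ with $a_n\to\infty$, pairwise disjoint, trapping all but $\varepsilon_n$ of the energy of $x_n$ in each frame. Defining $\sigma$ to agree with $\sigma_n$ on $W_n$ and arbitrarily elsewhere and splitting $\Theta(\sigma,x_n)$ over $W_n$ and its complement, the $W_n$-part is $\le\Theta(\sigma_n,x_n)<\varepsilon_n$ (since $\sigma$ and $\sigma_n$ coincide there) and the complementary part is bounded by the escaped tail, so $\Theta(\sigma,x_n)\to 0$ with $\|x_n\|=1$; thus this single weaving admits no positive lower bound, the final contradiction. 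The delicate step, and the crux of the whole proof, is arranging the windows to be simultaneously disjoint and pushed to infinity while still capturing the (uniformly bounded-below) energy — precisely the point where escape of mass and Bessel tail control must be balanced. The identical reasoning with partitions into $M$ blocks in place of complementary pairs then yields the general family statement.
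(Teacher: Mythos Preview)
The paper does not supply its own proof of this theorem: it is quoted from \cite{BeCaGrLaLy16} as a background result with no argument given, so there is nothing in the present paper to compare your proposal against.

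On its own merits your sketch is sound and captures the genuine difficulty. The universal upper bound and the case $x\neq 0$ are correct as written. In the weak-null case your gliding-hump outline is the right mechanism: for each fixed $x_n$ the Bessel property lets you truncate the tail, and weak convergence $x_n\rightharpoonup 0$ kills any fixed finite head, so an inductive choice of disjoint windows $W_k$ and a diagonal subsequence does produce a single $\sigma$ along which $\Theta(\sigma,x_{n_k})\to 0$. Two small points worth tightening in a full write-up: (i) the passage ``each $i$ lies in $\sigma_n$ or in $\sigma_n^c$ infinitely often'' should be read along the already-extracted weakly convergent subsequence, after which pigeonhole gives the dichotomy; (ii) when $I$ is finite the $x=0$ case cannot occur (the unit ball is norm-compact in the finite-dimensional span), so the gliding hump is only needed for countably infinite $I$, which matches the paper's convention $I=\mathbb N$ in the definition of weakly woven.
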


Moreover, a frame with $m$ elements in $\mathcal H^n$, is said to be a {\it full spark frame} if every subset of the frame, with cardinality $n$, is also a frame for $\mathcal H^n$. For example, if $\{e_i\}_{ i \in [2]},~e_i=(\delta_{i, k})_{k\geq 1}$, is an orthonormal basis in $\mathbb R^2$, then $\{e_1, e_2, e_1 + e_2\}$  is a full spark frame for $\mathbb R^2$. Furthermore, if every element of a finite frame can be represented as a linear combination of the remaining others, then the frame is called a {\it weak full spark frame}.

For example, if $\{e_i\}_{ i \in [3]}$ is an orthonormal basis of $\mathbb R^3$, $\lbrace e_1, e_1, e_2, e_2, e_3, e_3  \rbrace$ is a weak full spark frame but not a full spark frame. In this context, it is a fortuitous evident that every nontrivial (other than exact) full spark frame is also a weak full spark frame.

\begin{prop} \cite {BeCaGrLaLy16} \label{Bessel} Let $\{f_{ij}\}_{i \in I}$ be a collection of Bessel sequences in $\mathcal H$ with bounds $B_j$'s for every $j \in [m]$, then every weaving forms a Bessel sequence with bound $\sum \limits_{j \in [m]} B_j$ and norm of corresponding synthesis operator is bounded by $\sqrt{\sum \limits_{j \in [m]} B_j}$.

\end{prop}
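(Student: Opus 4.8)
The plan is to verify the Bessel inequality directly and then translate the resulting bound into the operator-norm statement via the standard duality between the analysis and synthesis operators. Fix an arbitrary weaving: it is determined by a partition $\{\sigma_j\}_{j \in [m]}$ of the index set $I$, so that the weaving is the family $\bigcup_{j \in [m]} \{f_{ij}\}_{i \in \sigma_j}$, indexed by the pairs $(i,j)$ with $i \in \sigma_j$. Since each block sits inside the full Bessel sequence $\{f_{ij}\}_{i \in I}$, I expect the desired bound to fall out of simply summing the individual Bessel estimates.

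First I would fix $f \in \mathcal H$ and estimate the total sum of squared coefficients taken over the weaving. Because the $\sigma_j$'s are subsets of $I$ and every summand $|\langle f, f_{ij}\rangle|^2$ is nonnegative, I can write
\begin{align*}
\sum_{j \in [m]} \sum_{i \in \sigma_j} |\langle f, f_{ij}\rangle|^2 \;\le\; \sum_{j \in [m]} \sum_{i \in I} |\langle f, f_{ij}\rangle|^2 \;\le\; \Big(\sum_{j \in [m]} B_j\Big) \|f\|^2,
\end{align*}
where the first inequality uses $\sigma_j \subseteq I$ together with nonnegativity of the terms, and the second uses that the $j$-th sequence is Bessel with bound $B_j$. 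Since this holds for every $f$ and is independent of the chosen partition, every weaving is a Bessel sequence with bound $\sum_{j \in [m]} B_j$.

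For the operator-norm claim I would invoke the standard fact that a Bessel sequence with bound $B$ has analysis operator of norm at most $\sqrt{B}$, and that the synthesis operator, being the adjoint, has the same norm. Concretely, if $U$ denotes the analysis operator of the weaving, the inequality above reads $\|Uf\|^2 \le \big(\sum_{j} B_j\big)\|f\|^2$, whence $\|U\| \le \sqrt{\sum_{j} B_j}$; as the synthesis operator equals $U^{*}$ and $\|U^{*}\| = \|U\|$, the norm of the synthesis operator is bounded by $\sqrt{\sum_{j \in [m]} B_j}$.

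The argument is elementary and I do not anticipate a genuine obstacle. The one point deserving a moment of care is the first inequality: one must note that enlarging the summation index from the partition block $\sigma_j$ to all of $I$ can only increase the sum, precisely because every term is nonnegative, and this is what permits each block to be controlled by its own Bessel bound irrespective of how the partition was formed. The passage to the synthesis-operator norm is then a routine appeal to the adjoint relationship between the analysis and synthesis operators.
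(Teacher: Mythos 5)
Your proof is correct. Note that the paper itself gives no proof of this proposition --- it is quoted from \cite{BeCaGrLaLy16} --- and your argument (monotonicity of the nonnegative coefficient sums over the partition blocks, summing the individual Bessel bounds, then passing to the synthesis operator as the adjoint of the analysis operator) is precisely the standard one found in that reference, so there is nothing to correct or compare beyond that.
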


\begin{prop} \label{frame sequence} Every frame sequence is a Bessel sequence.

\end{prop}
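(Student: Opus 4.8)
The plan is to unwind the definitions and observe that the only gap between being a frame sequence and being a Bessel sequence is the domain on which the upper inequality is required to hold. Recall that $\{f_i\}_{i\in I}$ being a frame sequence means it is a frame, in the sense of \eqref{Eq:Frame}, for the closed subspace $M := \overline{\operatorname{span}}\{f_i\}_{i\in I}$; that is, there exist constants $0 < A \le B$ with
$$A\|f\|^2 \le \sum_{i\in I} |\langle f, f_i\rangle|^2 \le B\|f\|^2 \quad \text{for all } f \in M.$$
The Bessel condition, by contrast, asks only for the right-hand inequality, but demands it for every $f \in \mathcal H$, not merely for $f \in M$. So the entire content of the statement is to propagate the upper bound from $M$ to all of $\mathcal H$.

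To do this, I would fix an arbitrary $f \in \mathcal H$ and write $f = P_M f + (I - P_M) f$, where $P_M$ denotes the orthogonal projection of $\mathcal H$ onto $M$. The key observation is that each $f_i$ lies in $M$, so $\langle (I - P_M) f, f_i\rangle = 0$, and hence $\langle f, f_i\rangle = \langle P_M f, f_i\rangle$ for every $i \in I$. Consequently $\sum_{i} |\langle f, f_i\rangle|^2 = \sum_{i} |\langle P_M f, f_i\rangle|^2$, which reduces the estimate for a general $f$ to one for a vector living inside $M$.

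Since $P_M f \in M$, the frame-sequence upper inequality applies to $P_M f$ and yields $\sum_{i} |\langle P_M f, f_i\rangle|^2 \le B\|P_M f\|^2$. Combining this with $\|P_M f\| \le \|f\|$ (as $P_M$ is an orthogonal projection, hence norm-nonincreasing) gives $\sum_{i} |\langle f, f_i\rangle|^2 \le B\|f\|^2$ for all $f \in \mathcal H$, which is exactly the Bessel condition with bound $B$. There is no real obstacle here; the single point that must be handled with care is the reduction to $P_M f$, namely recognizing that the membership $f_i \in M$ is precisely what allows the inner products to ignore the $M^\perp$-component of $f$. Everything else is a direct application of the upper frame bound on the subspace together with the norm-nonincreasing property of the orthogonal projection.
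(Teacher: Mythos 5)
Your proof is correct and follows essentially the same route as the paper: both decompose $f$ orthogonally along $M=\overline{\operatorname{span}}\{f_i\}_{i\in I}$ and its complement, use $f_i\in M$ to replace $\langle f,f_i\rangle$ by $\langle P_M f,f_i\rangle$, and then apply the upper frame bound on $M$ together with $\|P_M f\|\le\|f\|$. The only difference is presentational: you spell out the justification for $\sum_{i}|\langle f,f_i\rangle|^2=\sum_{i}|\langle P_M f,f_i\rangle|^2$, which the paper leaves implicit.
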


\begin{proof} Let $\{f_i\}_{i \in I}$ be a frame sequence in $\mathcal H$, then it forms a frame for $F=\overline {\text{span}}\{f_i\}_{i \in I}$. Therefore $\mathcal H = F \oplus F ^{\perp}$ and hence for every $f \in \mathcal H$ we have $f = f_F + f_{F^{\perp}}$. Therefore for some $B>0$ we obtain,
$$\sum_{i \in I} |\langle f,f_i \rangle |^2=\sum_{i \in I} |\langle f_F,f_i \rangle |^2 \leq B \|f_F\|^2 \leq B\|f\|^2.$$

\end{proof}

\begin{remark}
The converse implication of the above proposition is not necessarily true, which is evident from the following fact:\\
Consider $\{e_i\}_{i \in \mathbb N}$ as an orthonormal basis for $\mathcal H$ and let us define
$$f_i = e_i + e_{i+1}, ~i \in \mathbb N.$$
Then $\{f_i\}_{i \in \mathbb N}$ forms a Bessel sequence in $\mathcal H$ but not a frame sequence. For detail discussion regarding the same we refer the Example $5.1.10$ in \cite{Ch08}.
\end{remark}

\begin{lem} \label{woven-equivalency} Let $\lbrace f_i \rbrace_{i \in I}$ and  $\lbrace g_i \rbrace_{i \in I}$ be frames for $\mathcal H$ with bounds $A_1, B_1$ and $A_2, B_2$, respectively. Then the following results are equivalent:
\begin{enumerate}
\item $\lbrace f_i \rbrace_{i \in I}$ and  $\lbrace g_i \rbrace_{i \in I}$ are woven.
\item For every $\sigma \subset I$, if $S_\sigma$ is the associated frame operator for the corresponding weaving, then for every $f \in \mathcal H$ we have $\|S_\sigma f\| \geq k\|f\|$ for some $k>0$, independent of $\sigma$.
\end{enumerate}
\end{lem}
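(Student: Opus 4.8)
The plan is to exploit that every weaving operator $S_\sigma$ is positive and self-adjoint, so that its quadratic form coincides with the frame-type sum for the weaving, namely $\langle S_\sigma f, f\rangle = \sum_{i\in\sigma}|\langle f,f_i\rangle|^2 + \sum_{i\in\sigma^c}|\langle f,g_i\rangle|^2$. By Proposition \ref{Bessel}, every weaving is Bessel with the common bound $B := B_1 + B_2$, \emph{independent of} $\sigma$; hence $0 \leq S_\sigma \leq B\,I$ and $\|S_\sigma\| \leq B$ uniformly in $\sigma$. Consequently the upper frame bound is automatic, and the only genuine content of the notion ``woven'' is a universal lower bound on $\langle S_\sigma f, f\rangle$. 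The whole lemma then reduces to translating between a lower bound on the form $\langle S_\sigma f, f\rangle$ and a lower bound on the norm $\|S_\sigma f\|$.

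For the implication $(1)\Rightarrow(2)$, I would take the universal lower frame bound $A$ of the weaving, so that $A\|f\|^2 \leq \langle S_\sigma f, f\rangle$ for all $f \in \mathcal H$ and all $\sigma \subset I$. A single application of the Cauchy--Schwarz inequality gives $\langle S_\sigma f, f\rangle \leq \|S_\sigma f\|\,\|f\|$, and combining the two estimates yields $\|S_\sigma f\| \geq A\|f\|$. Thus $k = A$ works, and it is manifestly independent of $\sigma$.

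For $(2)\Rightarrow(1)$, the upper frame bound is free from Proposition \ref{Bessel}, so it remains only to produce a uniform lower frame bound out of the hypothesis $\|S_\sigma f\| \geq k\|f\|$. Here I would invoke the operator inequality $S_\sigma^2 \leq B\,S_\sigma$, which holds because $S_\sigma(B\,I - S_\sigma)$ is a product of commuting positive operators and is therefore positive. This gives $\|S_\sigma f\|^2 = \langle S_\sigma^2 f, f\rangle \leq B\,\langle S_\sigma f, f\rangle$, and feeding in the hypothesis yields $k^2\|f\|^2 \leq B\,\langle S_\sigma f, f\rangle$, that is, $\langle S_\sigma f, f\rangle \geq (k^2/B)\|f\|^2$. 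Since $B = B_1 + B_2$ does not depend on $\sigma$, this is a universal lower frame bound, so $\{f_i\}_{i\in I}$ and $\{g_i\}_{i\in I}$ are woven (with universal bounds $k^2/(B_1+B_2)$ and $B_1+B_2$).

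The step I expect to carry the real weight is the operator inequality $S_\sigma^2 \leq B\,S_\sigma$ combined with the uniformity of $B$: it is precisely Proposition \ref{Bessel} that supplies a single Bessel bound $B_1+B_2$ valid for all $\sigma$ simultaneously, and without this uniformity the resulting lower bound $k^2/B$ could degrade as $\sigma$ varies. The Cauchy--Schwarz direction is routine; the subtle point throughout is simply to keep every constant independent of the partition $\sigma$.
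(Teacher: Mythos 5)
Your proposal is correct, and its overall strategy matches the paper's: both directions hinge on identifying $\langle S_\sigma f,f\rangle$ with the weaving frame sum and on the uniform Bessel bound $B_1+B_2$ from Proposition \ref{Bessel}. The forward direction is the paper's argument verbatim in substance: the paper writes $\|S_\sigma f\| = \sup_{\|g\|=1}|\langle S_\sigma f,g\rangle| \geq \langle S_\sigma f, f/\|f\|\rangle \geq A\|f\|$, which is exactly your Cauchy--Schwarz step. The only genuine divergence is in how the key inequality $\|S_\sigma f\|^2 \leq (B_1+B_2)\,\langle S_\sigma f,f\rangle$ is justified in the converse direction. The paper factors $S_\sigma = T_\sigma T_\sigma^*$ through $\ell^2$, estimates $\|T_\sigma T_\sigma^* f\|^2 \leq \|T_\sigma\|^2\|T_\sigma^* f\|^2$, and uses $\|T_\sigma\|^2 \leq B_1+B_2$ from Proposition \ref{Bessel}, noting that $\|T_\sigma^* f\|^2$ is the frame sum. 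You instead stay at the level of the frame operator and prove the operator inequality $S_\sigma^2 \leq B S_\sigma$ from positivity of the product of the commuting positive operators $S_\sigma$ and $BI - S_\sigma$ (equivalently, from the functional calculus, since $t^2 \leq Bt$ on $[0,B]$). Both routes produce the identical universal lower bound $k^2/(B_1+B_2)$; the paper's version is marginally more elementary in that it needs no fact about products of commuting positive operators, while yours avoids introducing the analysis and synthesis operators explicitly and makes transparent that the entire converse is an instance of the general spectral inequality $S^2 \leq \|S\|\,S$ for positive operators, with Proposition \ref{Bessel} supplying the $\sigma$-independent bound on $\|S_\sigma\|$ --- the uniformity you correctly flag as the crux.
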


\begin{proof} \underline{$(1 \implies 2)$}

Let $\lbrace f_i \rbrace_{i \in I}$ and  $\lbrace g_i \rbrace_{i \in I}$ be woven with universal frame bounds $A, B$. Therefore for every $\sigma \subset I$ and for every $f \in \mathcal H$ we have,
$$A\|f\|^2 \leq \sum_{i \in \sigma}|\langle f,f_i \rangle |^2 + \sum_{i \in \sigma^c}|\langle f,g_i \rangle |^2 \leq B\|f\|^2.$$
If $S_\sigma$ is the associated frame operator for the corresponding weaving, then from the above inequality we have,
\begin{equation*} \label{equation}
A\|f\|^2 \leq \langle S_\sigma f,f \rangle \leq B\|f\|^2.
\end{equation*}
Therefore,
\begin{eqnarray*}
\|S_\sigma f\|=\text{sup}_{\|g\|=1} |\langle S_\sigma f,g \rangle| \geq  \left \langle S_\sigma f, \frac {f} {\|f\|} \right \rangle \geq A\|f\|.
\end{eqnarray*}

\underline{$(2 \implies 1)$}

For every $f \in \mathcal H$ and $\sigma \subset I$, $S_\sigma f=T_\sigma T_\sigma^*f$, where $T_\sigma, T_\sigma^*$ are the corresponding synthesis and analysis operators, respectively; and we have
$k^2\|f\|^2 \leq \|S_\sigma f\|^2=\|T_\sigma T_\sigma^*f\| \leq \|T_\sigma \|^2 \|T_\sigma^*f\|^2$ and hence we obtain,
$$\frac {k^2} {B_1+B_2} \|f\|^2 \leq \|T_\sigma^*f\|^2 = \sum_{i \in \sigma}|\langle f,f_i \rangle |^2 + \sum_{i \in \sigma^c}|\langle f,g_i \rangle |^2.$$
The universal upper frame bound for the weaving will be achieved by Proposition \ref{Bessel}.

\end{proof}

\begin{thm} \cite {BeCaGrLaLy16} \label{thm-span} In $\mathcal H^n$, two frames $\lbrace f_i \rbrace_{i \in [m]}$ and  $\lbrace g_i \rbrace_{i \in [m]}$ are woven if and only if for every $\sigma \subseteq [m]$, $\text{span}(\lbrace f_i \rbrace_{i \in \sigma} \cup \lbrace g_i \rbrace_{i \in \sigma^c}) = \mathcal H^n$.
\end{thm}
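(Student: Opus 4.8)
The plan is to leverage Proposition \ref{prop-span}, which in the finite-dimensional setting $\mathcal{H}^n$ identifies the frame property with the spanning property. Both directions hinge on applying this equivalence to each individual weaving $\{f_i\}_{i\in\sigma} \cup \{g_i\}_{i\in\sigma^c}$, together with the crucial observation that there are only finitely many subsets $\sigma \subseteq [m]$.

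For the forward implication, suppose $\{f_i\}_{i\in[m]}$ and $\{g_i\}_{i\in[m]}$ are woven. Then by definition, for each $\sigma \subseteq [m]$ the weaving $\{f_i\}_{i\in\sigma} \cup \{g_i\}_{i\in\sigma^c}$ forms a frame for $\mathcal{H}^n$, and Proposition \ref{prop-span} forces its span to be all of $\mathcal{H}^n$. This direction is immediate and requires no uniformity.

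For the converse, assume every weaving spans $\mathcal{H}^n$. By Proposition \ref{prop-span}, every weaving is therefore a frame for $\mathcal{H}^n$; the remaining task is to upgrade the individual bounds to \emph{universal} ones. The upper bound is handled directly: since $\{f_i\}_{i\in[m]}$ and $\{g_i\}_{i\in[m]}$ are themselves frames with finite upper bounds $B_1, B_2$, Proposition \ref{Bessel} guarantees that every weaving is Bessel with the single bound $B_1 + B_2$, independent of $\sigma$. For the lower bound, I would denote by $A_\sigma > 0$ the optimal lower frame bound of the weaving indexed by $\sigma$, equivalently the smallest eigenvalue of the positive, invertible frame operator $S_\sigma$; this is strictly positive precisely because the weaving spans.

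The main point, and the place where finite dimensionality is genuinely used, is that the index set $[m]$ has only $2^m$ subsets. Setting $A = \min_{\sigma \subseteq [m]} A_\sigma$, we take a minimum over finitely many strictly positive numbers, so $A > 0$. This $A$ then serves as a universal lower bound valid for every $\sigma$, and together with the uniform upper bound $B_1 + B_2$ it certifies that the two frames are woven with universal bounds $A, B_1 + B_2$. The only subtlety to guard against is the possibility that the lower bounds $A_\sigma$ degenerate to zero along the family of weavings; finiteness of the subset collection rules this out automatically, whereas in an infinite-dimensional or infinite-index setting this is exactly the point at which additional hypotheses would be required.
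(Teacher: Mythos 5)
Your proof is correct. Note that the paper itself gives no proof of this statement: Theorem \ref{thm-span} is quoted from \cite{BeCaGrLaLy16}, so there is no in-paper argument to compare against, and your write-up supplies the missing proof using only tools the paper has already stated. The forward direction is, as you say, immediate from Proposition \ref{prop-span}. For the converse you correctly isolate the two points that remain after Proposition \ref{prop-span} upgrades ``spans'' to ``is a frame'' for each individual weaving: a universal upper bound, which Proposition \ref{Bessel} gives as $B_1+B_2$ independently of $\sigma$, and a universal lower bound, which follows because there are only $2^m$ subsets $\sigma\subseteq[m]$, each weaving has a strictly positive optimal lower bound $A_\sigma$ (the least eigenvalue of its invertible frame operator $S_\sigma$), and a minimum of finitely many positive numbers is positive. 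Your closing observation is also exactly the right one: the counting argument is what collapses in the infinite-dimensional or infinite-index setting, and that is precisely why the equivalence of weak weaving and weaving in general $\mathcal H$ (the other theorem the paper cites from \cite{BeCaGrLaLy16}) requires a genuinely harder proof rather than this finite minimization.
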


\subsection{{Gap and angle between subspaces}}  Let $M$ and $N$ be two closed subspaces of a Hilbert space $\mathcal H$. Then the {\it gap} between $M$ and $N$ is given by, $\hat {\delta}(M,N)= \max \lbrace \delta(M,N),\delta(N,M)\rbrace$, where $\delta(M,N)= \sup \limits_{x \in S_{M}} dist(x,N)$,  $S_{M}$ is the unit sphere in $M$ and $dist(x,N)$ is the distance from $x$ to $N$.

Again the $cosine$ of the angle between two closed subspaces $M$ and $N$ of  a Hilbert space $\mathcal H$ is given by,$$c(M,N)=sup\lbrace |\langle x,y \rangle|: x \in M \cap (M \cap N)^{\perp}, \| x\| \leq 1,  y \in N \cap (M \cap N)^{\perp}, \| y\| \leq 1 \rbrace$$ and the $cosine$ of the minimal angle of the same  is given by, $$c_{0} (M,N)=sup\lbrace |\langle x,y \rangle|: x \in M, \| x\| \leq 1,  y \in N , \| y\| \leq 1 \rbrace.$$

 For the extensive discussion regarding the gap and the angle between two subspaces, we refer (\cite{BaTr09, De95, Ka80}).

\begin{remark} \cite{Ka80} \label{gap}  Let $M$ and $N$ be two closed subspaces of a Hilbert space $\mathcal H$. Then the followings are satisfied:
\begin{enumerate}
\item $\delta(M,N)=0$ if and only if $M \subset N$.
\item $\hat {\delta}(M,N)=0$ if and only if $M=N$.
\end{enumerate}
\end{remark}

\begin{lem} \cite{De95} \label{angle-perp}  Let $M$ and $N$ be two closed subspaces of a Hilbert space $\mathcal H$. Then $c_{0} (M,N) =0$ if and only if $M \perp N$.

\end{lem}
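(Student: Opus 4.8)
The plan is to prove both implications directly from the definition of $c_0(M,N)$ as the supremum of $|\langle x,y\rangle|$ taken over the unit balls of $M$ and $N$, so that the entire argument reduces to unwinding the definitions of the minimal angle and of orthogonality of subspaces.

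First I would dispose of the easy direction. Assume $M \perp N$. By the definition of orthogonality of subspaces, $\langle x,y\rangle = 0$ for every $x \in M$ and every $y \in N$, and in particular for all such vectors satisfying $\|x\| \leq 1$ and $\|y\| \leq 1$. Hence every quantity appearing in the defining supremum is zero, which forces $c_0(M,N)=0$.

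For the converse I would assume $c_0(M,N)=0$ and show that $\langle x,y\rangle = 0$ for arbitrary $x \in M$ and $y \in N$. If either vector is zero this is immediate, so suppose $x,y \neq 0$. Then $x/\|x\| \in M$ and $y/\|y\| \in N$ are unit vectors lying in the respective unit balls, so the hypothesis gives $|\langle x/\|x\|, y/\|y\|\rangle| \leq c_0(M,N) = 0$. By homogeneity of the inner product this yields $|\langle x,y\rangle| = \|x\|\,\|y\|\,|\langle x/\|x\|, y/\|y\|\rangle| = 0$, whence $\langle x,y\rangle = 0$. Since $x \in M$ and $y \in N$ were arbitrary, we conclude $M \perp N$.

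Since the statement is essentially a reformulation of the definitions, I do not anticipate any genuine obstacle. The only place demanding a little care is the normalization step in the converse: one must first separate out the trivial case of a zero vector before dividing by its norm, and one must observe that the supremum being zero is exactly equivalent to each individual inner product over the two unit balls vanishing.
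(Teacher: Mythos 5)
Your proof is correct and complete: the paper itself gives no argument for this lemma, citing it directly from the reference [De95], and your two-sided verification is precisely the standard definitional one. Both directions are handled properly, including the necessary care in the converse of treating zero vectors separately before normalizing, and the observation that a supremum of nonnegative quantities equals zero exactly when every term $|\langle x,y\rangle|$ with $x$, $y$ in the respective unit balls vanishes.
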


\begin{thm} \cite{De95} \label{angle}  Let $M$ and $N$ be two closed subspaces of a Hilbert space $\mathcal H$. Then the following statements are equivalent:
\begin{enumerate}
\item $c_{0} (M,N)<1$ .
\item $M \cap N=\lbrace 0 \rbrace$ and $M+N$ is closed.
\end{enumerate}
\end{thm}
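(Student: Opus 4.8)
The plan is to show that both conditions are equivalent to a single two-sided norm comparison on $M + N$: the existence of constants relating $\|x+y\|^2$ to $\|x\|^2 + \|y\|^2$ for $x \in M$, $y \in N$. Write $c = c_0(M,N)$; by Cauchy--Schwarz one always has $c \le 1$, and the definition of $c_0$ gives, for all $x \in M$ and $y \in N$, the estimate $|\langle x, y\rangle| \le c\,\|x\|\,\|y\|$. This single inequality is the engine behind both implications.

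For $(1) \Rightarrow (2)$, assume $c < 1$. First I would dispose of the intersection: if $0 \neq z \in M \cap N$, then taking $x = y = z/\|z\|$ forces $|\langle x,y\rangle| = 1$, hence $c = 1$, a contradiction; thus $M \cap N = \{0\}$. For closedness of $M + N$, the key algebraic step is the lower bound
\begin{equation*}
\|x + y\|^2 = \|x\|^2 + 2\,\mathrm{Re}\,\langle x,y\rangle + \|y\|^2 \ge \|x\|^2 - 2c\,\|x\|\,\|y\| + \|y\|^2 \ge (1-c)\bigl(\|x\|^2 + \|y\|^2\bigr),
\end{equation*}
where the last inequality is just $c(\|x\| - \|y\|)^2 \ge 0$. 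Given a sequence $z_n = x_n + y_n \to z$ with $x_n \in M$, $y_n \in N$, applying this bound to the differences $z_n - z_m = (x_n - x_m) + (y_n - y_m)$ shows that $\{x_n\}$ and $\{y_n\}$ are Cauchy; since $M$ and $N$ are closed they converge in $M$ and $N$ respectively, so $z \in M + N$.

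For $(2) \Rightarrow (1)$, I would argue by contradiction, obtaining the reverse comparison through the open mapping theorem. Since $M + N$ is closed it is itself a Hilbert space, and the bounded linear map $\Phi : M \times N \to M + N$, $\Phi(x,y) = x + y$, is surjective; injectivity follows from $M \cap N = \{0\}$. The bounded inverse theorem then yields a constant $C > 0$ with $\|x\|^2 + \|y\|^2 \le C\,\|x + y\|^2$ for all $x \in M$, $y \in N$. Now suppose $c = 1$; choose $x_n \in M$, $y_n \in N$ with $\|x_n\| = \|y_n\| = 1$ and $|\langle x_n, y_n\rangle| \to 1$, and (in the complex case) rotate each $y_n$ by a unimodular scalar so that $\langle x_n, y_n\rangle \to 1$. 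Then $\|x_n - y_n\|^2 = 2 - 2\,\mathrm{Re}\,\langle x_n, y_n\rangle \to 0$, and applying the inverse bound to $x_n \in M$ and $-y_n \in N$ forces $\|x_n\|^2 + \|y_n\|^2 \le C\,\|x_n - y_n\|^2 \to 0$, contradicting $\|x_n\| = 1$.

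The main obstacle is the $(2) \Rightarrow (1)$ direction: the passage from the purely set-theoretic hypotheses ($M \cap N = \{0\}$ and $M + N$ closed) to the quantitative gap $c < 1$ genuinely requires completeness via the open mapping theorem, since without closedness of $M + N$ the inverse of $\Phi$ need not be bounded and $c$ can equal $1$ even when $M \cap N = \{0\}$. A secondary technical point to handle carefully is the unimodular adjustment of $y_n$ in the complex setting, which is what converts $|\langle x_n, y_n\rangle| \to 1$ into the usable norm statement $\|x_n - y_n\| \to 0$.
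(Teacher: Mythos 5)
The paper offers no proof of this theorem: it is stated as a known result imported from Deutsch \cite{De95}, so there is no internal proof to compare yours against. Judged on its own, your argument is correct, self-contained, and is essentially the standard functional-analytic route to this equivalence. In the direction $(1)\Rightarrow(2)$, the chain $\|x+y\|^2 \ge \|x\|^2 - 2c\|x\|\|y\| + \|y\|^2 \ge (1-c)\left(\|x\|^2+\|y\|^2\right)$ is valid (the last step is exactly $c(\|x\|-\|y\|)^2 \ge 0$), and applying it to differences of a convergent sequence in $M+N$ correctly produces Cauchy sequences in $M$ and $N$, whose limits lie in $M$ and $N$ by closedness; the intersection argument is immediate. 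In the direction $(2)\Rightarrow(1)$, the map $\Phi\colon M\times N\to M+N$ is a bounded bijection between complete spaces precisely because $M$ and $N$ are closed and $M+N$ is assumed closed, so the bounded inverse theorem applies and your contradiction argument goes through. Two small points you should make explicit, though neither is a genuine gap: first, the supremum defining $c_0(M,N)$ is over the closed unit balls rather than the unit spheres, so to get unit vectors $x_n, y_n$ with $|\langle x_n,y_n\rangle|\to 1$ you should note that near-maximizing pairs are eventually nonzero and that normalizing them only increases $|\langle x,y\rangle|$; second, the unimodular rotation $y_n \mapsto \lambda_n y_n$ stays in $N$ because $N$ is a subspace over the complex scalars. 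Both are one-line remarks. Your closing observation is also accurate: the completeness hypotheses (closedness of $M$, $N$, and of $M+N$) are exactly what licenses the open mapping theorem, and without them $c_0$ can equal $1$ even with trivial intersection, which is why the quantitative conclusion $c_0<1$ cannot be reached by purely algebraic means.
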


\begin{thm}(Douglas' factorization theorem \cite{Do66})\label{Thm-Douglas} Let $\mathcal{H}_1, \mathcal{H}_2,$ and $\mathcal H$ be Hilbert spaces and $S\in \mathcal L(\mathcal H_1, \mathcal H)$, $T\in \mathcal L(\mathcal H_2, \mathcal H)$. Then the following results are equivalent:
\begin{enumerate}
\item $R(S) \subseteq R(T)$.
\item $SS^* \leq \alpha TT^*$ for some $\alpha >0$.
\item $S = TL$ for some $L \in \mathcal L(\mathcal H_1, \mathcal H_2)$.
\end{enumerate}
\end{thm}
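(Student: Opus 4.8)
The plan is to prove the three statements mutually equivalent by arranging them in a cycle, with the two directions emanating from $(3)$ being immediate verifications and the two directions into $(3)$ carrying the real content: constructing an intertwining operator either from the operator inequality or from the range inclusion.

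First I would dispose of the implications that start from $(3)$. If $S = TL$, then $R(S) = R(TL) \subseteq R(T)$, which is $(3) \Rightarrow (1)$. For $(3) \Rightarrow (2)$, the factorization gives $SS^* = T(LL^*)T^* \leq \|L\|^2\, TT^*$, since $LL^* \leq \|L\|^2 I$, so $(2)$ holds with $\alpha = \|L\|^2$. Both are one-line checks.

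The main content is $(2) \Rightarrow (3)$. Here I would first translate the operator inequality $SS^* \leq \alpha TT^*$ into the pointwise estimate $\|S^* h\|^2 \leq \alpha \|T^* h\|^2$ for every $h \in \mathcal H$. This suggests defining a map $M$ on $R(T^*) \subseteq \mathcal H_2$ by $M(T^* h) = S^* h$. The estimate does double duty: it forces well-definedness (if $T^* h = T^* h'$ then $\|S^*(h-h')\| \leq \sqrt{\alpha}\,\|T^*(h-h')\| = 0$) and boundedness with $\|M\| \leq \sqrt{\alpha}$. I would then extend $M$ by continuity to $\overline{R(T^*)} = (\ker T)^{\perp}$ and by zero on $\ker T$, obtaining $M \in \mathcal L(\mathcal H_2, \mathcal H_1)$ satisfying $MT^* = S^*$. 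Taking adjoints gives $TM^* = S$, so $L := M^*$ is the desired factor. I expect the well-definedness step, which is exactly what the inequality buys us, to be the delicate point in this part.

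Finally, for $(1) \Rightarrow (3)$ I would argue directly: since $R(S) \subseteq R(T)$, each $Sx$ equals $Ty$ for some $y$, and selecting the minimal-norm solution $y \in (\ker T)^{\perp}$ defines a map $L$ with $TL = S$. The one nontrivial point, and the step I expect to be the principal obstacle, is boundedness of $L$, which I would obtain from the closed graph theorem: if $x_n \to x$ and $Lx_n \to z$, then continuity of $T$ yields $Tz = \lim TLx_n = \lim Sx_n = Sx = TLx$, so $z - Lx \in \ker T$; but each $Lx_n$ lies in the closed subspace $(\ker T)^{\perp}$, hence so does $z$, forcing $z = Lx$. Thus the graph is closed, $L$ is bounded, and the cycle closes. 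The two genuinely substantive steps are therefore the inequality-driven well-definedness in $(2) \Rightarrow (3)$ and the range-splitting that secures boundedness in $(1) \Rightarrow (3)$.
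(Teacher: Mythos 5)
The paper does not actually prove this statement: it is quoted in the preliminaries as a classical result with a citation to Douglas (1966) and is only used later as a black box (in the proof of Theorem~\ref{iff woven}), so there is no internal proof to compare yours against. Judged on its own merits, your argument is correct and is essentially Douglas' original proof. The implications out of $(3)$ are the routine ones; for $(2) \Rightarrow (3)$ your construction is the standard one: the inequality $\langle SS^*h,h\rangle \leq \alpha \langle TT^*h,h\rangle$, i.e.\ $\|S^*h\|^2 \leq \alpha\|T^*h\|^2$, makes the map $T^*h \mapsto S^*h$ well defined and bounded on $R(T^*)$, and after extending by continuity to $\overline{R(T^*)} = (\ker T)^{\perp}$ and by zero on $\ker T$, taking adjoints in $MT^* = S^*$ gives $S = TM^*$; for $(1) \Rightarrow (3)$ the minimal-norm lifting into $(\ker T)^{\perp}$ combined with the closed graph theorem is exactly the classical route, and your verification that the graph is closed (using that $(\ker T)^{\perp}$ is closed to force $z - Lx \in \ker T \cap (\ker T)^{\perp} = \{0\}$) is sound. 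Two minor points you should make explicit for completeness: in $(1) \Rightarrow (3)$, the linearity of $L$ needs a sentence (it follows from uniqueness of the solution in $(\ker T)^{\perp}$: if $Ty_i = Sx_i$ with $y_i \in (\ker T)^{\perp}$, then $y_1 + \lambda y_2$ is the distinguished solution for $x_1 + \lambda x_2$), and in $(2) \Rightarrow (3)$ the extended operator $M$ must be checked to still satisfy $MT^* = S^*$, which is immediate since the extension agrees with the original map on $R(T^*)$. Neither is a gap, only bookkeeping; the proof is complete and closes the equivalence via $(1) \Leftrightarrow (3)$ and $(2) \Leftrightarrow (3)$.
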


\section{Characterization of Woven Frames}\label{Sec-char}
In this section, we characterize woven frames, mainly through constructions of frames from  given frames. The proposed constructions are based on the images of a given frame by means of  bounded linear operators. Before diving into the main results, we start the discussion with the following Proposition.

\begin{prop} Let $\lbrace f_i \rbrace_{i \in [m]}$ be a frame for $\mathcal H^n$. Suppose $f_{m+1}=0$, then $\lbrace (f_i-f_{i+1}) \rbrace_{i \in [m]}$ is also a frame for $\mathcal H^n$ and these two frames are woven.
\end{prop}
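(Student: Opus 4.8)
The plan is to exploit the finite-dimensional setting, where both assertions reduce to statements about spans via Proposition \ref{prop-span} and Theorem \ref{thm-span}. Setting $g_i = f_i - f_{i+1}$ for $i \in [m]$ with the convention $f_{m+1} = 0$, the identity I would record first is the telescoping sum $\sum_{j=i}^{m} g_j = f_i - f_{m+1} = f_i$, valid for each $i \in [m]$; in particular $g_m = f_m$. To see that $\{g_i\}_{i \in [m]}$ is a frame, I would invoke Proposition \ref{prop-span} and argue that $\text{span}\{g_i\}_{i \in [m]} = \mathcal{H}^n$: the telescoping identity shows each $f_i$ lies in $\text{span}\{g_j\}_{j \in [m]}$, whence $\text{span}\{f_i\}_{i \in [m]} \subseteq \text{span}\{g_i\}_{i \in [m]}$, and since $\{f_i\}$ is a frame its span is all of $\mathcal{H}^n$.

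For the woven part, I would apply Theorem \ref{thm-span}: it suffices to fix an arbitrary $\sigma \subseteq [m]$ and prove that $W := \text{span}(\{f_i\}_{i \in \sigma} \cup \{g_i\}_{i \in \sigma^c}) = \mathcal{H}^n$. As $\{f_i\}$ already spans, I only need each $f_i \in W$, which I would establish by downward induction on $i$. For the base case $i = m$, if $m \in \sigma$ then $f_m \in W$ directly, while if $m \in \sigma^c$ then $g_m = f_m \in W$. For the inductive step, assuming $f_{i+1} \in W$: if $i \in \sigma$ then $f_i \in W$ trivially, and if $i \in \sigma^c$ then $f_i = g_i + f_{i+1} \in W$, since both $g_i \in W$ and $f_{i+1} \in W$ by the hypothesis.

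The individual computations are routine; the only point demanding care --- what I would regard as the crux --- is selecting the correct recursion. Because $g_i = f_i - f_{i+1}$ couples consecutive indices, one must process the indices in decreasing order and anchor the induction at $i = m$, precisely where the convention $f_{m+1} = 0$ renders $g_m = f_m$ available regardless of whether $m$ falls in $\sigma$ or $\sigma^c$. Once this is in place, the weaving spans $\mathcal{H}^n$ for every $\sigma$, and Theorem \ref{thm-span} delivers that the two frames are woven.
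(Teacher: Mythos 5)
Your proposal is correct and takes essentially the same approach as the paper: the paper dispatches this proposition in one line by appealing to ``elementary row operations,'' and your telescoping identity $\sum_{j=i}^{m} g_j = f_i$ together with the downward induction (adding $f_{i+1}$ back to $g_i$ when $i \in \sigma^c$) is exactly those row operations carried out explicitly, routed through Proposition~\ref{prop-span} and Theorem~\ref{thm-span}. In short, yours is the fully detailed version of the argument the paper merely gestures at.
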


\begin{proof} The proof will be followed from elementary row operations.
\end{proof}

\begin{remark} In the above Proposition instead of  $f_{m+1} = 0$, if $f_{m+1} = f_1$, then $\lbrace (f_i-f_{i+1}) \rbrace_{i \in [m]}$ may not be a frame for $\mathcal H^n$. For example, let  $\{e_i\}_{ i \in [3]}$ be an orthonormal basis in $\mathbb R^3$, then $\{-e_1 + e_2, e_1 + e_2, -2 e_1 + e_2 - e_3, e_1 + e_2 + e_3\}$ is a frame for $\mathbb R^3$. But clearly, $\lbrace (f_i-f_{i+1}) \rbrace_{i \in [4]} = \{-2 e_1, 3 e_1 + e_3, -3 e_1 - 2 e_3, 2 e_1 + e_3\}$ is not a frame for $\mathbb R^3$.

But if this is a frame, then they must be woven, which is evident from the fact that $f=\sum\limits_{i\in [m]} b_i \left(f_i-f_{i+1}\right)$ can be written as $f=(b_1 - b_j)f_1+(b_2 - b_1)f_2+...(b_j - b_{j-1})f_j + (b_{j+1} - b_j)(f_{j+1} - f_{j+2})+...(b_{m-1} - b_j)(f_{m-1} - f_m)+(b_m - b_j)(f_m - f_{m+1})$.
\end{remark}

\begin{remark} If $\lbrace f_i \rbrace_{i \in [m]}$ is a frame for $\mathcal H^n$ and suppose $f_{m+1} = 0$, then $\lbrace (\alpha f_i + \beta  f_{i+1}) \rbrace_{i \in [m]}$, $\alpha, \beta \neq 0$, is also a frame for $\mathcal H^n$ and they are woven.
\end{remark}

In the following result, we present conditions under which image of a given frame under an idempotent operator is woven with the said frame.

\begin{lem} Let $F (\neq 0) \in \mathcal L(\mathcal H)$ be a closed range,  idempotent operator with $R(F) = R(F^*)$. Suppose $\lbrace f_i \rbrace_{i \in I}$ is a frame for $R(F^*)$, then $\lbrace F f_i \rbrace_{i \in I}$ is also a frame for $R(F^*)$ and they are woven.
\end{lem}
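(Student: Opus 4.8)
The plan is to collapse every weaving of $\{f_i\}_{i \in I}$ and $\{Ff_i\}_{i \in I}$ back onto the original frame by passing to the adjoint. Write $M = R(F^*)$, which is closed since $F$ has closed range. The key preliminary observation is that $F^*$ is again idempotent, because $(F^*)^2 = (F^2)^* = F^*$, and therefore $F^*$ restricts to the identity on its own range: for $f \in M$ one has $f = F^* g$ for some $g$, whence $F^* f = (F^*)^2 g = F^* g = f$. This identity $F^*|_M = \mathrm{id}$ will do all the work.

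Before computing, I would confirm the image vectors lie in the correct subspace. Idempotency of $F$ gives $F f_i \in R(F)$, and the hypothesis $R(F) = R(F^*)$ forces $R(F) = M$; hence $\{Ff_i\}_{i \in I} \subset M$, so it is an admissible candidate for a frame for $M$.

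The heart of the argument is then a single line of bookkeeping. For any $f \in M$ the defining property of the adjoint together with $F^* f = f$ gives $\langle f, Ff_i \rangle = \langle F^* f, f_i \rangle = \langle f, f_i \rangle$ for every $i$, so for an arbitrary $\sigma \subseteq I$ the weaving functional collapses:
\begin{equation*}
\sum_{i \in \sigma} |\langle f, f_i \rangle|^2 + \sum_{i \in \sigma^c} |\langle f, Ff_i \rangle|^2 = \sum_{i \in I} |\langle f, f_i \rangle|^2 \in \bigl[A\|f\|^2,\, B\|f\|^2\bigr],
\end{equation*}
the bracketing being the frame inequality for $\{f_i\}$. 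Choosing $\sigma = \emptyset$ shows $\{Ff_i\}$ is itself a frame for $M$ with bounds $A,B$, and letting $\sigma$ range over all subsets shows every weaving is a frame for $M$ with the \emph{same} universal bounds $A,B$; by definition $\{f_i\}$ and $\{Ff_i\}$ are woven.

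The only point demanding care --- the nearest thing to an obstacle --- is allotting the two hypotheses their correct roles: idempotency of $F$ is precisely what yields $F^*|_M = \mathrm{id}$ and hence the collapse $\langle f, Ff_i\rangle = \langle f, f_i\rangle$, while the equality $R(F) = R(F^*)$ is exactly what keeps each $Ff_i$ inside $M$, so that $\{Ff_i\}$ is a frame for the intended subspace rather than merely for $R(F)$. Once $F^*|_M = \mathrm{id}$ is secured the estimates evaporate, since every weaving sum is literally equal to the frame sum of $\{f_i\}$; in particular the universal bounds are inherited automatically and neither Proposition \ref{Bessel} nor Lemma \ref{woven-equivalency} is needed.
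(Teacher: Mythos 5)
Your proof is correct, and its core identity is the same one the paper exploits: idempotency plus the adjoint makes every weaving sum collapse to $\sum_{i\in I}|\langle f,f_i\rangle|^2$. The paper reaches this by writing $f=F^*g$ and using $\langle F^*g,Ff_i\rangle=\langle g,F^2f_i\rangle=\langle g,Ff_i\rangle=\langle f,f_i\rangle$; you distill it to the cleaner statement $F^*|_{R(F^*)}=\mathrm{id}$, giving $\langle f,Ff_i\rangle=\langle f,f_i\rangle$ directly. Where you genuinely depart from the paper is in the frame property of $\{Ff_i\}_{i\in I}$: the paper proves it as a separate step, invoking the Moore--Penrose pseudoinverse to get the lower bound $A/\|(F^*)^{\dag}\|^2$ and the upper bound $B\|F\|^2$, and only afterwards runs the collapse argument for weavings. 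You instead obtain the frame property as the $\sigma=\emptyset$ case of the collapse, which buys two things: the pseudoinverse machinery disappears entirely, and the bounds sharpen to the exact original constants $A,B$, uniformly over all $\sigma$ (so, as you note, neither Proposition~\ref{Bessel} nor Lemma~\ref{woven-equivalency} is needed, whereas the paper's route leaves mismatched constants between the two parts). One further remark your argument makes visible: since $\langle f, Ff_i-f_i\rangle=0$ for all $f\in M$ and $Ff_i\in M$, if one also assumes $f_i\in M$ (the usual reading of ``frame for $R(F^*)$,'' and the one consistent with the paper's examples), then $Ff_i=f_i$ outright and the lemma is vacuous; your proof has the merit of never needing $f_i\in M$, so it covers the nontrivial ``outer frame'' reading as well.
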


\begin{proof} Let $\lbrace f_i \rbrace_{i \in I}$ be a frame for $R(F^*)$ with bounds $A, B$. Then for every $f \in  R(F^*)$ we have,
$$\sum_{i \in I} | \langle f, F f_i \rangle |^2 = \sum_{i \in I} | \langle F^*f,  f_i \rangle |^2 \leq B\|F^* f\|^2 \leq B\|F\|^2 \|f\|^2.$$
Again since $f \in  R(F^*)=R(F)$, $\|f\|^2 = \|(F^*)^{\dag} F^* f\|^2 \leq \|(F^*)^{\dag}\|^2 \|F^* f\|^2$ and hence
$\frac {\|f\|^2} {\|(F^*)^{\dag}\|^2} \leq \|F^* f\|^2$. Therefore for every $f \in  R(F^*)$ we obtain,
$$\sum_{i \in I} | \langle f, F f_i \rangle |^2 = \sum_{i \in I} | \langle F^*f,  f_i \rangle |^2  \geq A \|F^* f\|^2 \geq \frac {A} {\|(F^*)^{\dag}\|^2} \|f\|^2.$$
Consequently, $\lbrace F f_i \rbrace_{i \in I}$ forms a frame for $R(F^*)$.

Moreover, for every $f \in  R(F^*)$, there exists $g \in \mathcal H$ such that $F^* g=f$ and since $F^2 = F$, for every $\sigma \subset I$ and for all $f \in  R(F^*)$  we obtain,
\begin{eqnarray*}
\sum_{i \in \sigma} | \langle f, f_i \rangle |^2 + \sum_{i \in \sigma ^c} | \langle f, F f_i \rangle |^2 & = & \sum_{i \in \sigma} | \langle F^* g, f_i \rangle |^2 + \sum_{i \in \sigma ^c} | \langle F^* g, F f_i \rangle |^2
\\ & = & \sum_{i \in \sigma} | \langle  g, F f_i \rangle |^2 + \sum_{i \in \sigma ^c} | \langle  g, F f_i \rangle |^2
\\ & = & \sum_{i \in I} | \langle  g, F f_i \rangle |^2
\\ & = & \sum_{i \in I} | \langle  f,  f_i \rangle |^2.
\end{eqnarray*}

\noindent Therefore, our assertion is tenable.
\end{proof}

\begin{remark} It is to be noted that, if one of the conditions of $F^2=F$ and $R(F) = R(F^*)$ fails, then the conclusion of the above Lemma may not hold. This is evident from the following two examples.
\end{remark}

\begin{ex}
Consider an idempotent operator $F$ on $\mathbb R^2$ so that $F e_1 = e_1 + 2 e_2, ~  F e_2 = 0$, then $R(F) =\text{~span~} \{e_1 + 2 e_2\} \neq \text{~span~} \{e_1\} = R(F^*)$.
 Now for the frame  $\mathcal F =\{e_1, e_2, e_1 + e_2\}$ for $\mathbb R^2$, $F(\mathcal F)= \{e_1 + 2 e_2, 0, e_1 + 2 e_2\}$  is not a frame for $R(F^*)$.
\end{ex}

\begin{ex}
Consider an operator $F$ on $\mathbb R^3$ so that $F e_1 = e_1 + e_2, ~ F e_2 = -  e_1 + e_2, ~ F e_3 = 0$, then $F^2 \neq F$ but $R(F)=R(F^*)$. Now let us choose a frame  $\{f_i\}_{i \in [3]} = \{e_1, e_1 - e_2, 2 e_1\}$ for $R(F^*)$, then
$\lbrace F f_i \rbrace_{i \in [3]} =\{e_1 + e_2, 2 e_1, 2 e_1 + 2 e_2\}$ is also a frame for $R(F^*)$, but they are not woven, which can be verified for $\sigma = \{1 , 3\}$.
\end{ex}

In the following outcomes we study  woven-ness of frames and their  images  under invertible operators.

\begin{remark} The image of a frame under invertible operators is not necessarily woven with the frame, which is evident from the following example:\\
$\mathcal F=\{e_2, e_1 + e_2, 2e_2 \}$ is a frame for $\mathbb R^2$. Let us consider an invertible operator $T$ so that $Te_1=e_1 - e_2,~ Te_2=-e_1 - e_2$. Then $T \mathcal F=\{-(e_1 + e_2), - 2e_2,- 2(e_1 + e_2) \}$ is also a frame for $\mathbb R^2$, however they are not woven, which can be verified by considering $\sigma = \{1, 3\}$.
\end{remark}





\begin{prop} \label{inv-woven}The image of woven frames under invertible operator preserves their woven-ness.
\end{prop}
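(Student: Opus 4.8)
The plan is to show that if $\{f_i\}_{i\in I}$ and $\{g_i\}_{i\in I}$ are woven frames for $\mathcal H$ and $T \in \mathcal L(\mathcal H)$ is invertible, then $\{Tf_i\}_{i\in I}$ and $\{Tg_i\}_{i\in I}$ are again woven. First I would fix an arbitrary $\sigma \subset I$ and write down the weaving quantity for the transformed families, namely $\sum_{i\in\sigma}|\langle f, Tf_i\rangle|^2 + \sum_{i\in\sigma^c}|\langle f, Tg_i\rangle|^2$, and move $T$ across the inner product using its adjoint to obtain $\sum_{i\in\sigma}|\langle T^*f, f_i\rangle|^2 + \sum_{i\in\sigma^c}|\langle T^*f, g_i\rangle|^2$. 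This is precisely the weaving quantity for the original frames evaluated at the vector $T^*f$.

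The key step is then to invoke the universal bounds $A, B$ that $\{f_i\}$ and $\{g_i\}$ enjoy as woven frames: for every $\sigma$ and every vector $h \in \mathcal H$ we have $A\|h\|^2 \le \sum_{i\in\sigma}|\langle h, f_i\rangle|^2 + \sum_{i\in\sigma^c}|\langle h, g_i\rangle|^2 \le B\|h\|^2$. Applying this with $h = T^*f$ yields $A\|T^*f\|^2 \le \sum_{i\in\sigma}|\langle f, Tf_i\rangle|^2 + \sum_{i\in\sigma^c}|\langle f, Tg_i\rangle|^2 \le B\|T^*f\|^2$, and these bounds hold uniformly in $\sigma$.

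The remaining task is to convert the $\|T^*f\|$ bounds into genuine $\|f\|$ bounds, which is where the invertibility of $T$ enters. For the upper bound I would simply use $\|T^*f\| \le \|T^*\|\,\|f\| = \|T\|\,\|f\|$, giving the universal upper constant $B\|T\|^2$. For the lower bound, since $T$ is invertible so is $T^*$, and from $f = (T^*)^{-1}(T^*f)$ we get $\|f\| \le \|(T^*)^{-1}\|\,\|T^*f\|$, hence $\|T^*f\|^2 \ge \|f\|^2 / \|(T^*)^{-1}\|^2$. This produces the universal lower constant $A/\|(T^*)^{-1}\|^2 > 0$, independent of $\sigma$, establishing that every weaving is a frame with these uniform bounds.

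I do not anticipate a serious obstacle here, since the argument is a direct adjoint-and-invertibility computation; the only point requiring a little care is ensuring that the constants $A\|(T^*)^{-1}\|^{-2}$ and $B\|T\|^2$ are genuinely independent of the choice of $\sigma$, which follows because $A$ and $B$ are the \emph{universal} woven bounds and the operator norms of $T$ and $(T^*)^{-1}$ do not depend on $\sigma$. The counterexample in the preceding remark illustrates that the hypothesis \emph{woven} (rather than merely both being frames) is essential: invertibility alone does not manufacture weaving, but it does preserve it.
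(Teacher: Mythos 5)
Your proof is correct and follows essentially the same route as the paper: pass $T$ to the other side of the inner product via $T^*$, apply the universal lower bound at $T^*f$, and use invertibility to get $\|T^*f\|^2 \geq \|f\|^2/\|(T^*)^{-1}\|^2$ (note $\|(T^*)^{-1}\| = \|T^{-1}\|$, so your constant matches the paper's $A/\|T^{-1}\|^2$ exactly). The only cosmetic difference is the upper bound, which you obtain directly as $B\|T\|^2$ while the paper invokes its Bessel-bound proposition; both are fine.
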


\begin{proof} Let $\lbrace f_i \rbrace_{i \in I}$ and $\lbrace g_i \rbrace_{i \in I}$ be woven in $\mathcal H$ with universal bounds $A, B$ and suppose $T \in \mathcal L(\mathcal H)$ is an invertible operator. Then $\lbrace T f_i \rbrace_{i \in I}$ and $\lbrace T g_i \rbrace_{i \in I}$ are also frames for $\mathcal H$.

For every $\sigma \subset I$ and for every $f \in \mathcal H$ we obtain,
\begin{eqnarray*}
\sum_{i \in \sigma} | \langle f, Tf_i \rangle |^2 + \sum_{i \in \sigma ^c} | \langle f, T g_i \rangle |^2 &=& \sum_{i \in \sigma} | \langle T^*f, f_i \rangle |^2 + \sum_{i \in \sigma ^c} | \langle T^*f, g_i \rangle |^2
\\& \geq & A\|T^*f\|^2
\\& \geq & \frac {A} {\|T^{-1}\|^2}\|f\|^2.
\end{eqnarray*}
The upper frame bound of the respective weaving will be achieved from the Proposition \ref{Bessel}.
\end{proof}

In the following theorem we discuss a necessary and sufficient condition of woven frames.

\begin{thm} \label{iff woven} Let $\mathcal F=\lbrace f_i \rbrace_{i \in I}$ and $\mathcal G=\lbrace g_i \rbrace_{i \in I}$ be two frames for $\mathcal H$. Then they are woven if and only if $R(T_{\mathcal F \mathcal G})=\mathcal H$, where $T_{\mathcal F \mathcal G}$ is the associated synthesis operator of the respective weaving.
\end{thm}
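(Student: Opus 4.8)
The plan is to reduce the statement to the standard fact that a Bessel sequence is a frame for $\mathcal H$ precisely when its synthesis operator is surjective, and then to invoke the equivalence between weakly woven and woven frames from \cite{BeCaGrLaLy16} to accommodate the universal-bound requirement built into the definition of woven frames. For a fixed $\sigma \subset I$, the synthesis operator of the weaving $\lbrace f_i \rbrace_{i \in \sigma} \cup \lbrace g_i \rbrace_{i \in \sigma^c}$ is the map $T_{\mathcal F \mathcal G} : l^2(I) \to \mathcal H$ sending $\{c_i\}$ to $\sum_{i \in \sigma} c_i f_i + \sum_{i \in \sigma^c} c_i g_i = T_{\sigma_f}(\{c_i\}) + T_{\sigma^c_g}(\{c_i\})$, which is well defined and bounded since every weaving is Bessel by Proposition \ref{Bessel}.

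For the forward implication I would assume $\mathcal F$ and $\mathcal G$ are woven. Then, for the prescribed $\sigma$, the weaving is a frame for $\mathcal H$, so its analysis operator $T_{\mathcal F \mathcal G}^*$ is bounded below; equivalently the synthesis operator $T_{\mathcal F \mathcal G}$ is onto, which is exactly $R(T_{\mathcal F \mathcal G}) = \mathcal H$. This direction is immediate once the surjectivity characterization of frames is recalled.

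For the converse I would assume $R(T_{\mathcal F \mathcal G}) = \mathcal H$ for the weaving associated with each $\sigma \subset I$. Since each such weaving is Bessel by Proposition \ref{Bessel} and has surjective synthesis operator, it is a frame for $\mathcal H$. As this holds for every $\sigma$, the pair $\mathcal F, \mathcal G$ is weakly woven, and by the equivalence of weakly woven and woven \cite{BeCaGrLaLy16} it follows that $\mathcal F$ and $\mathcal G$ are woven.

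The step I expect to be the real obstacle is bridging the gap between a possibly $\sigma$-dependent lower frame bound for each individual weaving and the single universal lower bound $A$ demanded by the definition of woven frames. Surjectivity of $T_{\mathcal F \mathcal G}$ only guarantees a positive lower bound for that particular $\sigma$, with no a priori uniform control as $\sigma$ ranges over all subsets of $I$. This is precisely where the weakly-woven-implies-woven theorem carries the load, upgrading the family of pointwise frame inequalities to uniform bounds; the universal upper bound, by contrast, is immediate from Proposition \ref{Bessel}.
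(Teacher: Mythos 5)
Your proposal is correct, and its engine is the same as the paper's: the equivalence, for a Bessel sequence, between being a frame and having a surjective synthesis operator. The difference lies in how the two arguments handle the passage from a fixed $\sigma$ to all $\sigma$. The paper applies Douglas' factorization theorem (Theorem \ref{Thm-Douglas}) to $R(I_{\mathcal H}) \subseteq R(T_{\mathcal F \mathcal G})$, obtaining a constant $A>0$ with $\|T_{\mathcal F \mathcal G}^* f\|^2 \geq A\|f\|^2$, and then writes the lower frame inequality ``for every $\sigma \subset I$'' as if that single $A$ were universal; but since $T_{\mathcal F \mathcal G}$ depends on $\sigma$, Douglas' theorem only yields a $\sigma$-dependent constant $A_\sigma$, so the paper's write-up silently skips exactly the uniformity issue you flag as the real obstacle. (It is defensible under the paper's preliminary definition of woven, which omits universal bounds, but not under the definition with universal bounds stated in the abstract.) Your route resolves this cleanly: you establish that each weaving is a frame with its own bounds --- i.e., the pair is weakly woven --- and then invoke the theorem of Bemrose et al.\ that weakly woven and woven are equivalent to upgrade to universal bounds. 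What your approach buys is rigor on the uniformity question, at the price of leaning on the (nontrivial) weakly-woven-implies-woven theorem; what the paper's approach buys is a short, self-contained derivation of the per-$\sigma$ lower bound directly from Douglas' theorem, essentially reproving the standard surjectivity criterion you quote, but leaving the uniform bound unjustified as written. A fully careful proof would combine the two: the paper's Douglas step for each $\sigma$, followed by your appeal to the weak-woven equivalence.
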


\begin{proof} Since $\mathcal F$ and $\mathcal G$ are frames for $\mathcal H$, by Proposition \ref{Bessel} every weaving is a Bessel sequence and hence $T_{\mathcal F \mathcal G}$ is well-defined.

 Let $R(T_{\mathcal F \mathcal G})=\mathcal H = R(I_{\mathcal H})$. Therefore using Theorem \ref{Thm-Douglas}, there exists $A>0$ and  for every $f \in \mathcal H$ we have $\|T_{\mathcal F \mathcal G}^* f\|^2 \geq A\|f\|^2$ and hence for every $\sigma \subset I$ we obtain,
$$\sum_{i \in \sigma} | \langle f, f_i \rangle |^2 + \sum_{i \in \sigma ^c} | \langle f,  g_i \rangle |^2 \geq  A\|f\|^2.$$
The converse implication will be followed from the definition of woven frame.
\end{proof}

\begin{ex} $\mathcal F=\{e_1 + e_2, e_1 +2 e_2, e_1 - e_2\}$ is a frame for $\mathbb R^2$. If $S$ is its frame operator, then $S\mathcal F=\{5e_1 + 8e_2, 7e_1 +14 e_2, e_1 - 4e_2\}$. It is to be noted that the associated synthesis operators of every weaving are onto.
\end{ex}

Since invertible operators preserve linear independency of vectors, so it is a natural intuition that a finite frame is woven with its image under the associated frame operator.

\noindent \underline{\bf Problem 1:} If $\{f_i\}_{i \in I}$ is a frame for $\mathcal H$ with the associated frame operator $S$, Can $\{f_i\}_{i \in I}$ and $\{Sf_i\}_{i \in I}$ woven?

At this moment, we are impotent to deliver an affirmative response, although we strongly believe that the same can be executed in this context. If so, then using Proposition \ref{inv-woven}, it is evident that $\{f_i\}_{i \in I}$ and $\{S^{-1}f_i\}_{i \in I}$ are woven.

\noindent \underline{\bf Problem 2:} Whether a frame is woven with its dual?

\section{Woven Frame Sequence}\label{Sec-wov seq}

A family $\lbrace f_i \rbrace_{i \in I}$ in $\mathcal H$ is said to be a frame sequence if it forms a frame  for its closed, linear span. It is to be noted that $\{f_i\}_{i\in I}$ is not necessarily a frame for $\mathcal H$. In this section we explore the possibilities of two frame sequences together, through the concept of woven frames, form a frame for $\mathcal H$.

\begin{defn} Two frame sequences  $\mathcal F= \lbrace f_i \rbrace_{i \in I}$ and $\mathcal G = \lbrace g_i \rbrace_{i \in I}$  in $\mathcal H$, are said to be {\bf woven frame sequences}, if for every $\sigma \subset I$, $\lbrace f_i \rbrace_{i \in \sigma} \cup \lbrace g_i \rbrace_{i \in \sigma^c} $ forms a frame for $\mathcal H$.
\end{defn}

\begin{ex}
For example, In $\mathbb R^3$, the frame sequences $\{e_1 + 2 e_3, e_1 - e_3, -e_1 + 2 e_3, e_1 + 3 e_3\}$ and $\{e_1 -  e_2, e_1 + 2 e_2, -e_1 + 3 e_2, e_1 - 2 e_2\}$ are woven  whereas $\{e_1 + 2 e_3, e_1 - e_3, -e_1 + 2 e_3, e_1 + 3 e_3\}$ and $\{e_1 -  e_2, e_1 + 2 e_2, -e_1 + 3 e_2, e_1 \}$ are not.
  \end{ex}



The notion of woven frame sequences is beneficial for its practical importance, because instead of two given frames, if we consider two frame sequences, then less restriction is there in our primary assumption and due to this fact, it is cost-effective.


\begin{thm} Let $\mathcal F=\lbrace f_i \rbrace_{i \in [m]}$ and $\mathcal G=\lbrace g_i \rbrace_{i \in [m]}$ be two frame sequences in  $\mathcal H^n$. Then the following statements are satisfied :
\begin{enumerate}
\item   $\mathcal F$ and  $\mathcal G$ are not woven if there exists a non-trivial $\sigma \subset [m]$ so that $c_0 \lbrace span (\mathcal F_{\sigma} \cup \mathcal G_{\sigma^c}), span (\mathcal F_{\sigma^c} \cup \mathcal G_{\sigma}) \rbrace<1~.$

\item If  for every non-trivial $\sigma \subset [m]$, $\hat {\delta} \lbrace span (\mathcal F_{\sigma} \cup \mathcal G_{\sigma^c}), span (\mathcal F_{\sigma^c} \cup \mathcal G_{\sigma}) \rbrace=0$ and $c_0 \lbrace (span (\mathcal F_{\sigma} \cup \mathcal G_{\sigma^c})), (span (\mathcal F_{\sigma^c} \cup \mathcal G_{\sigma}))^c \rbrace=0=c_0 \lbrace (span (\mathcal F_{\sigma^c} \cup \mathcal G_{\sigma})) , (span (\mathcal F_{\sigma} \cup \mathcal G_{\sigma^c}))^c$, then $\mathcal F$ and  $\mathcal G$ are  woven.
\end{enumerate}
\end{thm}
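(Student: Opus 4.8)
The plan is to reduce everything to the spanning characterization of Theorem \ref{thm-span}: in $\mathcal{H}^n$, the frame sequences $\mathcal F$ and $\mathcal G$ are woven precisely when, for every $\sigma$, the weaving $\mathcal{F}_\sigma \cup \mathcal{G}_{\sigma^c}$ spans $\mathcal{H}^n$ (equivalently, by Proposition \ref{prop-span}, is a frame). Throughout I write $M_\sigma = \operatorname{span}(\mathcal{F}_\sigma \cup \mathcal{G}_{\sigma^c})$ and record the bookkeeping identity $\operatorname{span}(\mathcal{F}_{\sigma^c} \cup \mathcal{G}_\sigma) = M_{\sigma^c}$, so that the second subspace occurring in both hypotheses is just the span of the complementary weaving. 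I also note that $M_\sigma + M_{\sigma^c} = \operatorname{span}(\mathcal{F}) + \operatorname{span}(\mathcal{G})$, a subspace of $\mathcal{H}^n$ independent of $\sigma$.

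For statement (1), I would start from the hypothesis $c_0(M_\sigma, M_{\sigma^c}) < 1$ for some non-trivial $\sigma$ and apply Theorem \ref{angle} (the implication $(1)\Rightarrow(2)$) to obtain $M_\sigma \cap M_{\sigma^c} = \{0\}$. The decisive step is then a dimension argument: if the weavings for both $\sigma$ and $\sigma^c$ were frames, we would have $M_\sigma = M_{\sigma^c} = \mathcal{H}^n$, forcing $M_\sigma \cap M_{\sigma^c} = \mathcal{H}^n \neq \{0\}$, a contradiction. Hence at least one of $M_\sigma, M_{\sigma^c}$ is a proper subspace, so that weaving fails to span $\mathcal{H}^n$ and, by Theorem \ref{thm-span}, $\mathcal F$ and $\mathcal G$ cannot be woven. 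I expect this part to be short and free of obstruction.

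For statement (2), I would fix a non-trivial $\sigma$ and first apply Remark \ref{gap}(2) to the hypothesis $\hat\delta(M_\sigma, M_{\sigma^c}) = 0$ to get $M_\sigma = M_{\sigma^c}$. Next I would apply Lemma \ref{angle-perp} to the two vanishing-cosine hypotheses, reading the superscript as the orthogonal complement, to obtain $M_\sigma \perp M_{\sigma^c}^\perp$ and $M_{\sigma^c} \perp M_\sigma^\perp$. The intended route is to combine $M_\sigma^\perp \perp M_\sigma$ (which always holds) with $M_\sigma^\perp \perp M_{\sigma^c}$ (from the second cosine condition) to conclude that $M_\sigma^\perp$ is orthogonal to $M_\sigma + M_{\sigma^c} = \operatorname{span}(\mathcal{F}) + \operatorname{span}(\mathcal{G})$, and then to argue that this sum exhausts $\mathcal{H}^n$, so that $M_\sigma^\perp = \{0\}$ and $M_\sigma = \mathcal{H}^n$; running this for every non-trivial $\sigma$ and invoking Theorem \ref{thm-span} would finish the proof.

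The main obstacle is precisely the last step of (2): passing from the orthogonality relations to the conclusion $M_\sigma = \mathcal{H}^n$. Taken literally, the gap and cosine hypotheses only pin down $M_\sigma = M_{\sigma^c}$, whence $M_\sigma = \operatorname{span}(\mathcal{F}) + \operatorname{span}(\mathcal{G})$, a single fixed subspace; to reach $\mathcal{H}^n$ one still needs $\operatorname{span}(\mathcal{F}) + \operatorname{span}(\mathcal{G}) = \mathcal{H}^n$. I would therefore either try to extract this fullness from the interplay of the two cosine conditions across complements, or make it explicit as part of the hypotheses, and I expect this verification to be where the genuine content and the real subtlety of the statement reside.
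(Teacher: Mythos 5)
Your part (1) is correct and is precisely the argument the paper leaves implicit: the paper's entire proof of this theorem is the single sentence ``Using Lemma \ref{angle-perp} and Theorem \ref{angle}, our assertions are quickly plausible,'' and your deduction --- $c_0(M_\sigma,M_{\sigma^c})<1$ forces $M_\sigma\cap M_{\sigma^c}=\{0\}$ by Theorem \ref{angle}, while wovenness would force $M_\sigma=M_{\sigma^c}=\mathcal H^n$ by Theorem \ref{thm-span}, a contradiction --- is the fleshed-out version of what the authors intend. (Even more directly: if both weavings spanned $\mathcal H^n$ one would have $c_0(M_\sigma,M_{\sigma^c})=c_0(\mathcal H^n,\mathcal H^n)=1$, so $c_0<1$ already refutes wovenness.)

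The obstacle you flag in part (2) is not a defect of your proposal; it is a genuine gap in the theorem itself, which the paper's one-line proof never confronts. Reading the superscript $c$ as orthogonal complement (the only reading under which the cited Lemma \ref{angle-perp}, a statement about closed subspaces, applies), the two cosine hypotheses say $M_\sigma\perp M_{\sigma^c}^{\perp}$ and $M_{\sigma^c}\perp M_\sigma^{\perp}$, i.e. $M_\sigma\subseteq M_{\sigma^c}$ and $M_{\sigma^c}\subseteq M_\sigma$ --- information already contained in the gap hypothesis $M_\sigma=M_{\sigma^c}$ via Remark \ref{gap}. As you observe, the hypotheses collectively say only that every non-trivial weaving spans the single fixed subspace $\mathrm{span}(\mathcal F)+\mathrm{span}(\mathcal G)$, and nothing forces that subspace to be all of $\mathcal H^n$. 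A concrete counterexample: in $\mathbb R^2$ take $\mathcal F=\mathcal G=\{e_1,e_1\}$. Every non-trivial weaving is $\{e_1,e_1\}$, so $M_\sigma=M_{\sigma^c}=\mathrm{span}\{e_1\}$, the gap hypothesis holds, and both cosine hypotheses hold since $c_0(\mathrm{span}\{e_1\},\mathrm{span}\{e_2\})=0$; yet no weaving is a frame for $\mathbb R^2$, so $\mathcal F$ and $\mathcal G$ are not woven in the sense the paper defines. Your proposed repair is exactly right: assuming additionally $\mathrm{span}(\mathcal F)+\mathrm{span}(\mathcal G)=\mathcal H^n$, the gap hypothesis gives $M_\sigma=M_\sigma+M_{\sigma^c}\supseteq\mathrm{span}(\mathcal F\cup\mathcal G)=\mathcal H^n$ for every non-trivial $\sigma$, and Theorem \ref{thm-span} finishes; the alternative repair is to weaken the conclusion to ``every weaving is a frame for the common span.'' In short, part (1) matches the paper, and in part (2) you have correctly located a hole that the paper's own proof does not close.
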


\begin{proof} Using  Lemma  \ref{angle-perp} and  Theorem \ref{angle}~, our assertions are quickly  plausible.

\end{proof}


\begin{thm} In $\mathcal H^n$, if $\mathcal F=\lbrace f_i \rbrace_{i \in [m]}$ and $\mathcal G=\lbrace g_i \rbrace_{i \in [m]}$ are two woven frame sequences, then for every non-trivial $\sigma \subset [m]$,
$$\hat {\delta} \lbrace span (\mathcal F_{\sigma} \cup \mathcal G_{\sigma^c}), span (\mathcal F_{\sigma^c} \cup \mathcal G_{\sigma}) \rbrace=0~.$$

\end{thm}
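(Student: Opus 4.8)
The plan is to reduce the gap statement to an equality of subspaces and then exploit the fact that, in finite dimensions, a frame for $\mathcal{H}^n$ is exactly a spanning set. By Remark \ref{gap}(2), we have $\hat{\delta}(M,N)=0$ if and only if $M=N$. Hence it suffices to show that, for every non-trivial $\sigma \subset [m]$,
$$\text{span}(\mathcal{F}_{\sigma} \cup \mathcal{G}_{\sigma^c}) = \text{span}(\mathcal{F}_{\sigma^c} \cup \mathcal{G}_{\sigma}).$$
I would establish this by proving that each side equals the whole space $\mathcal{H}^n$.

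First I would fix a non-trivial $\sigma$, so that $\sigma$ and $\sigma^c$ are both proper nonempty subsets of $[m]$. Since $\mathcal{F}$ and $\mathcal{G}$ are woven frame sequences, the definition guarantees that the weaving $\{f_i\}_{i\in\sigma}\cup\{g_i\}_{i\in\sigma^c}$ forms a frame for the full space $\mathcal{H}^n$ (not merely a frame sequence); this is precisely the strength of the woven hypothesis. Applying Proposition \ref{prop-span}, a finite family is a frame for $\mathcal{H}^n$ exactly when its linear span is $\mathcal{H}^n$, I obtain $\text{span}(\mathcal{F}_{\sigma} \cup \mathcal{G}_{\sigma^c}) = \mathcal{H}^n$. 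The key observation is that non-triviality of $\sigma$ forces $\sigma^c$ to be an equally admissible index set: the complementary weaving $\{f_i\}_{i\in\sigma^c}\cup\{g_i\}_{i\in\sigma}$ is just the weaving associated to the subset $\sigma^c$, which again is a frame for $\mathcal{H}^n$, so Proposition \ref{prop-span} yields $\text{span}(\mathcal{F}_{\sigma^c} \cup \mathcal{G}_{\sigma}) = \mathcal{H}^n$ as well.

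With both spans equal to $\mathcal{H}^n$, they coincide, and Remark \ref{gap}(2) then delivers $\hat{\delta}\{\text{span}(\mathcal{F}_{\sigma} \cup \mathcal{G}_{\sigma^c}), \text{span}(\mathcal{F}_{\sigma^c} \cup \mathcal{G}_{\sigma})\}=0$, completing the argument for the fixed $\sigma$, and hence for every non-trivial $\sigma$.

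Honestly, I do not expect a genuine obstacle here: the statement is essentially immediate once the woven condition is read as a spanning condition in each coordinate block. The only point deserving care is the restriction to \emph{non-trivial} $\sigma$, which is what lets me apply the frame-for-$\mathcal{H}^n$ conclusion simultaneously to $\sigma$ and to $\sigma^c$; for $\sigma=\emptyset$ or $\sigma=[m]$ one side of the weaving would reduce to a bare frame sequence $\mathcal{G}$ or $\mathcal{F}$, whose span may be a proper subspace, and the equality of spans could fail. Thus the non-triviality assumption in the hypothesis is exactly what the proof needs, and no further machinery beyond Proposition \ref{prop-span} and Remark \ref{gap} is required.
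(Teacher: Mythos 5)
Your proof is correct and is essentially the paper's own argument: the paper likewise observes that for a non-trivial $\sigma$ both weavings $\mathcal F_{\sigma} \cup \mathcal G_{\sigma^c}$ and $\mathcal F_{\sigma^c} \cup \mathcal G_{\sigma}$ are frames for $\mathcal H^n$, hence (by the finite-dimensional spanning characterization) both spans equal $\mathcal H^n$, and the gap vanishes by Remark \ref{gap}. Your write-up merely makes explicit the appeal to Proposition \ref{prop-span} and the role of non-triviality, which the paper leaves implicit.
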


\begin{proof} If $\mathcal F$ and $\mathcal G$ are woven, then for every non-trivial $\sigma \subset [m]$, both $\mathcal F_{\sigma} \cup \mathcal G_{\sigma^c}$ and $\mathcal F_{\sigma^c} \cup \mathcal G_{\sigma}$ constitute frames for $\mathcal H^n$.
Hence the conclusion directly follows from the Remark \ref{gap}~.
\end{proof}

\begin{remark} It is to be noted that, the two foregoing outcomes also hold for characterizing woven frames.
\end{remark}








In the following results we explore sufficient conditions for woven-ness between frame and frame sequence. The following theorem shows that woven-ness is preserved under perturbation.

\begin{thm} Let  $\mathcal F = \{f_i\}_{i \in I}$, $\mathcal G = \{g_i\}_{i \in I}$ be two woven frames for $\mathcal H$ with the universal frame bounds $A, B$ and $T_{\mathcal G}$ be the corresponding synthesis operator of $\mathcal G$. If $H=\{h_i\}_{i \in I}$ is a frame sequence in $\mathcal H$ with the associated synthesis operator $T_{H}$ so that $(\|T_{\mathcal G}\| + \|T_{H}\|) \|T_{\mathcal G} - T_{H}\| < A$, then $\mathcal F$ and $H$ are woven.

\end{thm}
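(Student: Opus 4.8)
The plan is to establish the universal lower frame bound for every weaving of $\mathcal F$ and $H$ by comparing it, partition by partition, against the corresponding weaving of $\mathcal F$ and $\mathcal G$, whose lower bound is already controlled by $A$. The upper bound comes for free: since $H$ is a frame sequence it is a Bessel sequence by Proposition \ref{frame sequence}, so by Proposition \ref{Bessel} every weaving $\{f_i\}_{i\in\sigma}\cup\{h_i\}_{i\in\sigma^c}$ is Bessel with a bound independent of $\sigma$. Hence only the lower estimate requires work.

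Fix $\sigma\subset I$ and $f\in\mathcal H$, and write $a_i=|\langle f,g_i\rangle|$ and $b_i=|\langle f,h_i\rangle|$. Because the two weavings agree on $\sigma$, the difference of their weaving sums is concentrated on $\sigma^c$:
$$\Big(\sum_{i\in\sigma}|\langle f,f_i\rangle|^2+\sum_{i\in\sigma^c}a_i^2\Big)-\Big(\sum_{i\in\sigma}|\langle f,f_i\rangle|^2+\sum_{i\in\sigma^c}b_i^2\Big)=\sum_{i\in\sigma^c}(a_i-b_i)(a_i+b_i).$$
I would bound the right-hand side by Cauchy--Schwarz as $\big(\sum_{i\in\sigma^c}(a_i-b_i)^2\big)^{1/2}\big(\sum_{i\in\sigma^c}(a_i+b_i)^2\big)^{1/2}$ and then control each factor. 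For the first factor the reverse triangle inequality gives $|a_i-b_i|\le|\langle f,g_i-h_i\rangle|$, and extending the sum from $\sigma^c$ to all of $I$ yields $\big(\sum_{i\in\sigma^c}(a_i-b_i)^2\big)^{1/2}\le\|(T_{\mathcal G}-T_H)^*f\|\le\|T_{\mathcal G}-T_H\|\,\|f\|$. For the second factor Minkowski's inequality together with the same enlargement of the index set gives $\big(\sum_{i\in\sigma^c}(a_i+b_i)^2\big)^{1/2}\le\|T_{\mathcal G}^*f\|+\|T_H^*f\|\le(\|T_{\mathcal G}\|+\|T_H\|)\|f\|$. The key point is that all three operator norms refer to the \emph{full} synthesis operators, so every estimate is independent of $\sigma$.

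Combining these, and invoking the universal lower bound $A$ for the $\mathcal F,\mathcal G$ weaving, the weaving sum for $\mathcal F,H$ satisfies
$$\sum_{i\in\sigma}|\langle f,f_i\rangle|^2+\sum_{i\in\sigma^c}|\langle f,h_i\rangle|^2\ \ge\ \Big(A-(\|T_{\mathcal G}\|+\|T_H\|)\|T_{\mathcal G}-T_H\|\Big)\|f\|^2.$$
The hypothesis forces the coefficient to be strictly positive, and since it does not depend on $\sigma$, I conclude that $\mathcal F$ and $H$ are woven with universal lower bound $A-(\|T_{\mathcal G}\|+\|T_H\|)\|T_{\mathcal G}-T_H\|$.

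I expect the only delicate point to be producing \emph{exactly} the product $(\|T_{\mathcal G}\|+\|T_H\|)\|T_{\mathcal G}-T_H\|$ appearing in the hypothesis, rather than a cruder condition such as $\|T_{\mathcal G}-T_H\|<\sqrt A$. This is precisely what the factorization $a_i^2-b_i^2=(a_i-b_i)(a_i+b_i)$ buys: splitting the estimate into a ``difference'' factor and a ``sum'' factor and bounding them separately delivers the two operator-norm quantities in the sharp combination. The one routine thing to verify carefully is that replacing the partial sums over $\sigma^c$ by the full sums over $I$ is legitimate and uniform, which holds because every summand is nonnegative, so restricting the index set only decreases the sum.
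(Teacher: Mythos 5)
Your proof is correct, and it reaches the stated conclusion by a genuinely different technical route than the paper. The paper does not work with the scalar frame sums at all: it first invokes Lemma \ref{woven-equivalency} to convert the hypothesis into the operator-norm statement $A\|f\| \le \bigl\| \sum_{i \in \sigma} \langle f,f_i \rangle f_i + \sum_{i \in \sigma^c} \langle f,g_i \rangle g_i \bigr\|$, then bounds the vector difference $\bigl\| \sum_{i \in \sigma^c} \langle f,g_i \rangle g_i - \sum_{i \in \sigma^c} \langle f,h_i \rangle h_i \bigr\|$ by the operator splitting $T_{\mathcal G}P_{\sigma^c}T_{\mathcal G}^* - T_H P_{\sigma^c}T_H^* = T_{\mathcal G}P_{\sigma^c}(T_{\mathcal G}^* - T_H^*) + (T_{\mathcal G} - T_H)P_{\sigma^c}T_H^*$, where $P_{\sigma^c}$ is the coordinate projection (this projection plays exactly the role that your enlargement of the index set from $\sigma^c$ to $I$ plays), and finally applies Lemma \ref{woven-equivalency} in the reverse direction to recover a frame bound, obtaining the universal lower bound $\bigl[A - (\|T_{\mathcal G}\|+\|T_H\|)\|T_{\mathcal G}-T_H\|\bigr]^2/(B+B_1)$. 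You instead stay entirely at the level of the defining frame inequality, and your factorization $a_i^2-b_i^2=(a_i-b_i)(a_i+b_i)$, combined with Cauchy--Schwarz, the reverse triangle inequality, and Minkowski, substitutes for the operator splitting. Both arguments are perturbation arguments producing the same quantity $(\|T_{\mathcal G}\|+\|T_H\|)\|T_{\mathcal G}-T_H\|$, but yours avoids the round trip through Lemma \ref{woven-equivalency}, and as a consequence delivers the strictly sharper universal lower bound $A - (\|T_{\mathcal G}\|+\|T_H\|)\|T_{\mathcal G}-T_H\|$: the paper's passage from a lower bound on $\|S_\sigma f\|$ back to a lower frame bound costs a squaring and a division by $B+B_1$. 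What the paper's route buys is uniformity of method --- the same lemma is its workhorse in the subsequent theorem on weaving with a frame sequence under the Bessel-norm smallness condition --- whereas your route is self-contained and elementary. Your handling of the two routine points (monotonicity of sums of nonnegative terms under restriction of the index set, and the Bessel upper bound via Propositions \ref{frame sequence} and \ref{Bessel}) matches the paper's and is sound.
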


\begin{proof}

For every $\sigma\subset I$, let $P_{\sigma}$ be the orthogonal projection on $\overline{\text{span}} \{e_i\}_{i \in \sigma}$ and therefore $T_{\sigma_f}=T_{\mathcal F} P_{\sigma}$.

\noindent Since $\mathcal F$ and $\mathcal G$ are woven with the universal bounds $A, B$, then using Lemma \ref{woven-equivalency}, for every $\sigma \subset I$ and every $f \in \mathcal H$ we have,
\begin{equation} \label{woven}
A\|f\| \leq \|\sum_{i \in \sigma} \langle f,f_i \rangle f_i + \sum_{i \in \sigma ^c} \langle f,g_i \rangle g_i \|.
\end{equation}
The proof will be completed with the following steps.\\
\noindent {\bf Step 1:} For all $f \in \mathcal H$ and $\sigma \subset I$,
$$\|\sum_{i \in \sigma ^c} \langle f,g_i \rangle g_i - \sum_{i \in \sigma ^c} \langle f,h_i \rangle h_i \| \leq (\|T_{\mathcal G}\| + \|T_{H}\|) \|T_{\mathcal G} - T_{H}\| \|f\|.$$

\noindent \underline{proof of Step 1:} Using Proposition \ref{frame sequence} and utilizing the  properties of the respective synthesis operators, for every $f \in \mathcal H$ and $\sigma \subset I$ we have,
\begin{eqnarray*}
\|\sum_{i \in \sigma ^c} \langle f,g_i \rangle g_i - \sum_{i \in \sigma ^c} \langle f,h_i \rangle h_i \|
 &\leq &  \|T_{\mathcal G}\| \| T^*_{\mathcal G} - T^*_{H}\| \|f\| + \|T_{\mathcal G} - T_{H}\| \| T^*_{H}\| \|f\|
\\ & = & (\|T_{\mathcal G}\| + \|T_{H}\|) \|T_{\mathcal G} - T_{H}\| \|f\|.
\end{eqnarray*}

\noindent {\bf Step 2:}  For every weaving, universal lower frame bound is $\frac {[A -  \|T_{\mathcal G} - T_{H}\| (\|T_{\mathcal G}\| + \|T_{H}\|)]^2} {B+B_1}$, where $B_1$ is an upper frame bound for $H$.

\noindent \underline{proof of Step 2:} Applying equation (\ref{woven}) and step 1, for every $f \in \mathcal H$ we obtain,
\begin{eqnarray*}
& \| & \sum  \limits_{i \in \sigma}  \langle f,f_i \rangle f_i + \sum_{i \in \sigma ^c} \langle f,h_i \rangle h_i \| \\ & \geq & \|\sum_{i \in \sigma} \langle f,f_i \rangle f_i + \sum_{i \in \sigma ^c} \langle f,g_i \rangle g_i \| - \|\sum_{i \in \sigma ^c } \langle f,g_i \rangle g_i - \sum_{i \in \sigma ^c} \langle f,h_i \rangle h_i \|
\\ & \geq & [A -  \|T_{\mathcal G} - T_{H}\| (\|T_{\mathcal G}\| + \|T_{H}\|)] \|f\|.
\end{eqnarray*}
Therefore the conclusion follows from Lemma \ref{woven-equivalency}.

\noindent The universal upper frame bound of the weaving will be achieved from the Proposition \ref{Bessel}.
\end{proof}

\begin{remark} If the frames $\mathcal F$, $\mathcal G$ are woven in $\mathcal H$ and $\mathcal G$, $H$ are woven in $\mathcal H$, then $\mathcal F$ and $H$ are not necessarily woven in $\mathcal H$, which is evident from the following example.

\end{remark}

\begin{ex} Let $\mathcal F = \{e_1, e_2, 2 e_1\}$, $\mathcal G = \{2 e_1, - e_2, - 2 e_2\}$ and $H=\{e_1, - e_1, 2 e_2\}$. Then $\mathcal F$ and $\mathcal G$ are woven as well as $\mathcal G$ and $H$ are woven , but $\mathcal F$ and $H$ are not woven in $\mathbb R^2$, as if we consider $\sigma = \{3\}$ then the associated weaving is $\{e_1, - e_1, 2 e_1\}$.

\end{ex}

\begin{cor} \label{woven frame sequence} Let $\mathcal F = \{f_i\}_{i \in I}$ be a frame for $\mathcal H$ with lower frame bound $A$ and $\mathcal G = \{g_i\}_{i \in I}$ be a frame sequence in $\mathcal H$. Let $T_{\mathcal F}$ and $T_{\mathcal G}$ be corresponding synthesis operators, respectively. Then $\mathcal F$ and $\mathcal G $ are woven if $$  (\|T_{\mathcal F} - T_{\mathcal G}\|) (\|T_{\mathcal F}\| + \|T_{\mathcal G}\|)< A.$$
\end{cor}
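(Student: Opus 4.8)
The plan is to recognize this Corollary as an immediate specialization of the preceding perturbation theorem, obtained by feeding that theorem a degenerate choice of woven pair. The key observation is that every frame is trivially woven with itself: for any $\sigma \subset I$ the weaving $\{f_i\}_{i \in \sigma} \cup \{f_i\}_{i \in \sigma^c}$ is literally $\mathcal F$ again, hence a frame with the frame bounds $A, B$ of $\mathcal F$, and these bounds are plainly independent of $\sigma$. Thus $\mathcal F$ and $\mathcal F$ are woven with universal bounds $A, B$, the universal lower bound being exactly the lower frame bound $A$ of $\mathcal F$.

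First I would apply the preceding theorem with its middle frame $\mathcal G$ taken to be $\mathcal F$ itself and its perturbing frame sequence $H$ taken to be $\mathcal G$. Under this identification the theorem's synthesis operators become $T_{\mathcal F}$ (for the woven frame paired with $\mathcal F$) and $T_{\mathcal G}$ (for the frame sequence), so the theorem's hypothesis $(\|T_{\mathcal G}\| + \|T_H\|)\|T_{\mathcal G} - T_H\| < A$ translates verbatim into $(\|T_{\mathcal F}\| + \|T_{\mathcal G}\|)\|T_{\mathcal F} - T_{\mathcal G}\| < A$, which is precisely the inequality assumed in the Corollary. Here it is essential that the universal lower woven bound of $\mathcal F$ with itself coincides with its own lower frame bound $A$, as noted above.

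With these substitutions the theorem's conclusion states exactly that $\mathcal F$ and $\mathcal G$ are woven, which is the assertion. The only points requiring verification are routine: that $\mathcal G$ satisfies the frame-sequence hypothesis demanded by the theorem (true by assumption) and that the product of operator norms is symmetric in its two factors, so that the order in which $\|T_{\mathcal F} - T_{\mathcal G}\|$ and $\|T_{\mathcal F}\| + \|T_{\mathcal G}\|$ appear is immaterial. I do not expect any genuine obstacle; the entire analytic content of the Corollary is already carried by the perturbation theorem, and the proof amounts to this bookkeeping of the hypotheses together with the self-weaving remark.
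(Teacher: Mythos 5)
Your proposal is correct and matches the paper's intent: the paper states this as a corollary of the preceding perturbation theorem with no separate proof, and the specialization you describe (taking the woven pair to be $\mathcal F$ with itself, so the universal lower bound is the lower frame bound $A$, and taking the perturbing frame sequence $H$ to be $\mathcal G$) is precisely the intended derivation. The self-weaving observation and the bookkeeping of hypotheses are exactly the content needed, so nothing is missing.
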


\begin{thm} Let  $\mathcal F = \{f_i\}_{i \in I}$ be a frame for $\mathcal H$ with frame bounds $A_1, B_1$ and $\mathcal G = \{g_i\}_{i \in I}$ be a frame sequence in $\mathcal H$ with bounds $A_2, B_2$. Suppose $0<(\sum \limits_{i \in I} \|f_i\|^2)^{\frac {1} {2}}=\lambda _1 <1~\text{and}~0<(\sum \limits_{i \in I} \|g_i\|^2)^{\frac {1} {2}}=\lambda _2 <1$ so that  $(\lambda _1 \sqrt B_1 + \lambda _2 \sqrt B_2) <A_1$. Then $\mathcal F ~\text{and}~\mathcal G$ are woven.

\end{thm}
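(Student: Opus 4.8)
The plan is to establish a uniform lower frame bound for every weaving by comparing the weaving frame operator with the frame operator of $\mathcal{F}$, while obtaining the universal upper bound essentially for free. First, since $\mathcal{F}$ is a frame and, by Proposition \ref{frame sequence}, $\mathcal{G}$ is a Bessel sequence, Proposition \ref{Bessel} guarantees that every weaving $\lbrace f_i\rbrace_{i\in\sigma}\cup\lbrace g_i\rbrace_{i\in\sigma^c}$ is Bessel with the common bound $B_1+B_2$; this disposes of the universal upper frame bound, so the whole difficulty lies in the lower bound.

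For the lower bound I would fix $\sigma\subset I$, let $S$ denote the frame operator of $\mathcal{F}$, and let $S_\sigma f=\sum_{i\in\sigma}\langle f,f_i\rangle f_i+\sum_{i\in\sigma^c}\langle f,g_i\rangle g_i$ be the weaving frame operator. The key observation is that the two operators differ only on $\sigma^c$, namely
$$S_\sigma f-Sf=\sum_{i\in\sigma^c}\langle f,g_i\rangle g_i-\sum_{i\in\sigma^c}\langle f,f_i\rangle f_i.$$
I would then estimate each tail by the triangle inequality followed by Cauchy--Schwarz, writing $\|\sum_{i\in\sigma^c}\langle f,f_i\rangle f_i\|\le(\sum_{i\in\sigma^c}|\langle f,f_i\rangle|^2)^{1/2}(\sum_{i\in\sigma^c}\|f_i\|^2)^{1/2}$, bounding the first factor by the Bessel bound $\sqrt{B_1}\,\|f\|$ and the second by $\lambda_1$; the analogous estimate for the $g_i$-tail yields $\lambda_2\sqrt{B_2}\,\|f\|$. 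Hence, uniformly in $\sigma$,
$$\|S_\sigma f-Sf\|\le(\lambda_1\sqrt{B_1}+\lambda_2\sqrt{B_2})\,\|f\|.$$

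Next, since $S$ is positive and self-adjoint with $\langle Sf,f\rangle\ge A_1\|f\|^2$, the computation already used in Lemma \ref{woven-equivalency} gives $\|Sf\|\ge A_1\|f\|$. Combining this with the previous estimate via the reverse triangle inequality produces
$$\|S_\sigma f\|\ge\|Sf\|-\|S_\sigma f-Sf\|\ge\bigl[A_1-(\lambda_1\sqrt{B_1}+\lambda_2\sqrt{B_2})\bigr]\|f\|.$$
By hypothesis $\lambda_1\sqrt{B_1}+\lambda_2\sqrt{B_2}<A_1$, so $k:=A_1-(\lambda_1\sqrt{B_1}+\lambda_2\sqrt{B_2})>0$ is independent of $\sigma$, and the conclusion that $\mathcal{F}$ and $\mathcal{G}$ are woven then follows from the implication $(2)\Rightarrow(1)$ of Lemma \ref{woven-equivalency}.

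I expect the main obstacle to be largely bookkeeping: making the tail estimate genuinely \emph{uniform} in $\sigma$, which works precisely because replacing $\sum_{i\in\sigma^c}$ by $\sum_{i\in I}$ only enlarges nonnegative sums, so $\lambda_1,\lambda_2$ and the Bessel bounds dominate every partial sum regardless of $\sigma$. A second point requiring a little care is that $\mathcal{G}$ is only a frame sequence rather than a frame for $\mathcal{H}$; this is not a genuine difficulty, since the direction of Lemma \ref{woven-equivalency} that I invoke rests only on the identity $S_\sigma=T_\sigma T_\sigma^*$ and on the Bessel-ness of both families, both of which remain valid in the present setting.
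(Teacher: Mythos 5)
Your proof is correct and follows essentially the same route as the paper's: the same decomposition of the weaving operator as $Sf$ plus a difference supported on $\sigma^c$, the same triangle-inequality-plus-Cauchy--Schwarz estimate giving the uniform bound $(\lambda_1\sqrt{B_1}+\lambda_2\sqrt{B_2})\|f\|$, the same reverse-triangle step using $\|Sf\|\geq A_1\|f\|$, and the same conclusion via Lemma \ref{woven-equivalency} and Proposition \ref{Bessel}. Your closing observation---that the $(2)\Rightarrow(1)$ direction of Lemma \ref{woven-equivalency} rests only on $S_\sigma=T_\sigma T_\sigma^*$ and Bessel-ness, so it remains valid although $\mathcal G$ is merely a frame sequence---is a subtlety the paper passes over silently, and it is worth retaining.
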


\begin{proof} The proof will be completed with the following steps.\\
\noindent {\bf Step 1:} For every $\sigma \subset I$ and every $f \in \mathcal H$,
$$\|  \sum \limits_{i \in \sigma^c }  \langle f,f_i \rangle f_i -  \sum \limits_{i \in \sigma^c } \langle f,g_i \rangle g_i \| \leq  (\lambda _1 \sqrt B_1 + \lambda _2 \sqrt B_2) \|f\|.$$
\noindent \underline{proof of Step 1:} Using Proposition \ref{frame sequence}, for all $f \in \mathcal H$ and $\sigma \subset I$ we have,
\begin{eqnarray*}
 \|  \sum \limits_{i \in \sigma^c }  \langle f,f_i \rangle f_i -  \sum \limits_{i \in \sigma^c } \langle f,g_i \rangle g_i \|
& \leq & \|\sum \limits_{i \in \sigma^c }  \langle f,f_i \rangle f_i\| + \|\sum \limits_{i \in \sigma^c } \langle f,g_i \rangle g_i \|
\\ & \leq & \sum \limits_{i \in \sigma^c } \|\langle f,f_i \rangle f_i\| + \sum \limits_{i \in \sigma^c } \| \langle f,g_i \rangle g_i \|
\\ & \leq & \sum \limits_{i \in I } \|\langle f,f_i \rangle f_i\| + \sum \limits_{i \in I } \| \langle f,g_i \rangle g_i \|
\\ & \leq & (\sum_{i \in I } |\langle f,f_i \rangle|^2)^{\frac {1} {2}} (\sum_{i \in I} \|f_i\|^2)^{\frac {1} {2}}
\\ & + &
 (\sum_{i \in I} |\langle f,g_i \rangle|^2)^{\frac {1} {2}} (\sum_{i \in I } \|g_i\|^2)^{\frac {1} {2}}
\\ & \leq & (\lambda _1 \sqrt B_1 + \lambda _2 \sqrt B_2) \|f\|.
\end{eqnarray*}

\noindent {\bf Step 2:}  For every weaving the lower frame bound is $\frac {[A_1 - (\lambda _1 \sqrt B_1 + \lambda _2 \sqrt B_2)]^2} {B_1 + B_2}.$

\noindent \underline{proof of Step 2:} Applying Step 1, for every $f \in \mathcal H$ we obtain,
\begin{eqnarray*}
\|\sum \limits_{i \in \sigma } \langle f,f_i \rangle f_i + \sum_{i \in \sigma^c } \langle f,g_i \rangle g_i\| & \geq & \|\sum_{i \in I } \langle f,f_i \rangle f_i\| - \|\sum_{i \in \sigma ^c } \langle f,f_i \rangle f_i - \sum_{i \in \sigma^c } \langle f,g_i \rangle g_i\|
\\ & \geq & [A_1 - (\lambda _1 \sqrt B_1 + \lambda _2 \sqrt B_2] \|f\|.
\end{eqnarray*}

\noindent Therefore applying Lemma \ref{woven-equivalency}, our goal is executed.

\noindent Furthermore, universal upper frame bound of the weaving will be accomplished by utilizing the Proposition \ref{Bessel}.
\end{proof}


\section*{Acknowledgments}
\noindent The first author is highly indebted to the fiscal support of MHRD, Government of India. He also extends his massive gratitude to Dr. Manideepa Saha (NIT Meghalaya, India) and  Professor Kallol Paul (Jadavpur University, India) for their useful suggestions and comments to improve this article. The second author is supported by DST-SERB project MTR/2017/000797.


\bibliography{references-frame}

\end{document}